\numberwithin{equation}{section}
\begin{document}

 \newcounter{thlistctr}
 \newenvironment{thlist}{\
 \begin{list}%
 {\alph{thlistctr}}%
 {\setlength{\labelwidth}{2ex}%
 \setlength{\labelsep}{1ex}%
 \setlength{\leftmargin}{6ex}%
 \renewcommand{\makelabel}[1]{\makebox[\labelwidth][r]{\rm (##1)}}%
 \usecounter{thlistctr}}}%
 {\end{list}}

\thispagestyle{empty}

\newtheorem{Lemma}{\bf LEMMA}[section]
\newtheorem{Theorem}[Lemma]{\bf THEOREM}
\newtheorem{Claim}[Lemma]{\bf CLAIM}
\newtheorem{Corollary}[Lemma]{\bf COROLLARY}
\newtheorem{Proposition}[Lemma]{\bf PROPOSITION}
\newtheorem{Example}[Lemma]{\bf EXAMPLE}
\newtheorem{Fact}[Lemma]{\bf FACT}
\newtheorem{definition}[Lemma]{\bf DEFINITION}
\newtheorem{Notation}[Lemma]{\bf NOTATION}
\newtheorem{remark}[Lemma]{\bf REMARK}

\newcommand{\restrict}{\mbox{$\mid\hspace{-1.1mm}\grave{}$}}
\newcommand{\covers}{\mbox{$>\hspace{-2.0mm}-{}$}}
\newcommand{\covered}{\mbox{$-\hspace{-2.0mm}<{}$}}
\newcommand{\notcover}{\mbox{$>\hspace{-2.0mm}\not -{}$}}

\newcommand{\boldalpha}{\mbox{\boldmath $\alpha$}}
\newcommand{\boldbeta}{\mbox{\boldmath $\beta$}}
\newcommand{\boldgamma}{\mbox{\boldmath $\gamma$}}
\newcommand{\boldxi}{\mbox{\boldmath $\xi$}}
\newcommand{\boldlambda}{\mbox{\boldmath $\lambda$}}
\newcommand{\boldmu}{\mbox{\boldmath $\mu$}}

\newcommand{\barzero}{\bar{0}}

\newcommand{\sfq}{{\sf q}}
\newcommand{\sfe}{{\sf e}}
\newcommand{\sfk}{{\sf k}}
\newcommand{\sfr}{{\sf r}}
\newcommand{\sfc}{{\sf c}}
\newcommand{\restr}{\negmedspace\upharpoonright\negmedspace}

{\color{red}
\numberwithin{equation}{section}

\newtheorem{definicion}{Definition}[section]
\newtheorem{Definition}[definicion]{Definition}
\newtheorem{theorem}[definicion]{Theorem}
\newtheorem{corollary}[definicion]{Corollary}
\newtheorem{lema}[definicion]{Lemma}
\newtheorem{notacion}[definicion]{Notation}
\newtheorem{teorema}[definicion]{Theorem}
\newtheorem{corolario}[definicion]{Corollary}
\newtheorem{ejemplo}[definicion]{Example}
\newtheorem{observacion}[definicion]{Observation}
\newtheorem{proposicion}[definicion]{Proposition}
\newtheorem{Remark}[definicion]{Remark}
\newtheorem{proposition}[definicion]{Proposition}

\newenvironment{Proof}{\noindent\bf Proof \rm}{$\hfill
\square$}

\newcommand{\debaj}[2]{ #1 \to_H #2}

\newcounter{ecuacionDef} \setcounter{ecuacionDef}{0}
\renewcommand{\theecuacionDef}{\arabic{ecuacionDef}}

\newenvironment{ecuacionDef}[1]%
{
	\refstepcounter{ecuacionDef}
	\vspace{0.12cm}
	{\rm (E\theecuacionDef)}:  #1
	\vspace{0.12cm}
}%

\newcounter{numeroAxioma} \setcounter{numeroAxioma}{0}
\renewcommand{\thenumeroAxioma}{\arabic{numeroAxioma}}

\newenvironment{numeroAxioma}[1]%
{
	\refstepcounter{numeroAxioma}
	\vspace{0.12cm}
	(A\thenumeroAxioma):  #1
	\vspace{0.12cm}
}%

}

\title[A Converse to a Katri\v{n}\'{a}k's Theorem] {Regular Double $p$-Algebras: A converse to a Katri\v{n}\'{a}k Theorem, and Applications}

\author[J. M. CORNEJO, M. Kinyon \and H. P. SANKAPPANAVAR]{Juan M. CORNEJO*, Michael KINYON** \and \\Hanamantagouda P. SANKAPPANAVAR***}

\newcommand{\acr}{\newline\indent}

\address{\llap{*\,} Departamento de Matem\'atica\acr
Universidad Nacional del Sur\acr
Alem 1253, Bah\'ia Blanca, Argentina\acr
INMABB - CONICET}

\email{jmcornejo@uns.edu.ar}

\address{\llap{**\,}Department of Mathematics\acr
                             University of Denver\acr
                             Denver,  Colorado, 80208 \acr
                             USA.}

\email{mkinyon@math.du.edu}

\address{\llap{***\,}Department of Mathematics\acr
                              State University of New York\acr
                              New Paltz, New York, 12561\acr
                              USA.}

\email{sankapph@hawkmail.newpaltz.edu}
            
\dedicatory{\bf In Memory of Professor Tibor Katri\v{n}\'{a}k}

\subjclass[2010]{Primary:03G25, 06D20, 08B15, 06D15, 03C05, 03B50; \, Secondary: 08B26, 06D30, 06E75}
\keywords{regular double $p$-algebra, regular dually pseudocomplemented Heyting algebra, regular pseudocomplemented dual Heyting algebras, and regular double Heyting algebra, regular De Morgan p-algebras, regular De Morgan Heyting algebras, regular De Morgan double Heyting algebras, regular De Morgan double p-algebras, logic $\mathcal{RDPCH }$, logic $\mathcal{RPCH}^d$, logic 
$\mathcal{RDMH}$ }
\date{}

\maketitle

\begin{abstract}

In 1973, Katri\v{n}\'{a}k proved that regular double $p$-algebras can be regarded as (regular) double Heyting algebras by ingeniously constructing binary terms for the Heying implication and its dual in terms of pseudocomplement and its dual.  In this paper 
we prove a converse to the Katri\v{n}\'{a}k's theorem, in the sense that in the variety $\mathbb{RDPCH}$ of regular dually pseudocomplemented Heyting algebras, the implication operation $\to$ satisfies the Katrinak's formula.   As applications of this result together with the above-mentioned Katri\v{n}\'{a}k's theorem, we show that the varieties $\mathbb{RDBLP}$, $\mathbb{RDPCH}$,  $\mathbb{RPCH}^d$ and $\mathbb{RDBLH}$ of regular double p-algebras, regular dually pseudocomplemented Heyting algebras, regular pseudocomplemented dual Heyting algebras, and regular double Heyting algebras, respectively, are term-equivalent to each other and also that the varieties $\mathbb{RDMP}$, $\mathbb{RDMH}$, $\mathbb{RDMDBLH}$, $\mathbb{RDMDBLP}$ of regular De Morgan p-algebras, regular De Morgan Heyting algebras, regular De Morgan double Heyting algebras, and regular De Morgan double p-algebras, respectively, are also term-equivalent to each other.  From these results and recent results of \cite{AdSaVc19}  and \cite{ AdSaVc20}, we deduce that the lattices of subvarieties of all these varieties have cardinality $2^{\aleph_0}$.  We then define new logics, $\mathcal{RDPCH }$, $\mathcal{RPCH}^d$, and $\mathcal{RDMH}$, and show that they are algebraizable with $\mathbb{RDPCH}$,  $\mathbb{RPCH}^d$ and $\mathbb{RDMH}$, respectively as their equivalent algebraic semantics.  It is also deduced that the lattices of extensions of all of the above mentioned logics have cardinality $2^{\aleph_0}$. 
\end{abstract}

\thispagestyle{empty}

\section{Introduction} \label{SA}

It is well-known that slight weakenings of complement in a Boolean algebra have led to the notions of pseudocomplemented lattice, dually pseudocomplemented lattice and De Morgan algebra.

A bounded lattice
$L$ is pseudocomplemented if and only if for every $x \in L$, there exists a largest element $x^* \in L$ such that $x \land x^* =0$.
 
An algebra $\langle L, \lor, \land, ^*, 0, 1\rangle$ is called a $p$-algebra if $\langle L, \lor, \land, 0,1\rangle$ is a bounded distributive lattice
and $^*$ is a pseudocomplementation on L.  A dual $p$-algebra is, of course, defined dually.  

 In 1949, Ribenboim \cite{Ri49} gave the following equational characterization of the class  of distributive pseudocomplemented lattices:

 (R1)  $(x \lor y) \lor  z \approx x \lor (y \lor z),$

(R2)  $(x \land y) \land z \approx x \land (y \land z),$

(R3) $x \lor y \approx y \lor x,$

(R4) $x \land y \approx y \land x,$

(R5) $ x \lor x \approx x,$

(R6) $ x \land x \approx x,$ 

(R7)  $x \lor (x \land y) \approx x,$

(R8)  $x \land (y \lor z) \approx (x \land y) \lor (x \land z),$

(R9) $x \land x^* \approx y \land y^*,$

(R10) $x \land (x \land y)^* \approx x \land y^*,$

(R11)  $x \land (x \land x^*)^* \approx x,$

(R12)  $(x \land x^*)^{**} \approx x \land  x^*$.\\
 
It follows from this result of Ribenboim that the class of distributive $p$-algebras is a variety. 
The following equational basis for distributive $p$-algebras is given much later in \cite[Corollary 2.8]{Sa87b} and will be useful in the sequel. \\

\begin{definition}
An algebra $\mathbf A =\langle A, \lor. \land, ^*,  0, 1\rangle$ is a $p$-algebra if the following conditions are satisfied:
\begin{itemize}
\item[\rm(S1)]  $\langle A, \lor. \land, 0, 1\rangle$ is a bounded distributive lattice,
\item[\rm(S2)]  $(x \lor y)^* \approx x^* \land y^*$,
\item[\rm(S3)]  $(x \land y)^{**} \approx x^{**} \land y^{**}$,
\item[\rm(S4)]  $ x \leq x^{**}$,
\item[(S5)] $x^* \land x^{**} \approx 0$ \textup. 
\end{itemize}
\end{definition}

Note that the identity (S5) can be replaced by the identity:  $x \land x^* \approx 0$.

Combining the notions of p-algebra and its dual, one naturally obtains double p-algebras which were first introduced in 1972 by Varlet \cite{Va72}.

\begin{definition}
An algebra $\mathbf A =\langle A, \lor. \land, ^*, ^+, 0, 1\rangle$ is a double $p$-algebra if the following conditions are satisfied:
\begin{enumerate}[\rm(1)]
	\item $\langle A, \lor. \land, ^*, 0, 1\rangle$ is a $p$-algebra,
	\item $\langle A, \lor. \land, ^+, 0, 1\rangle$ is a dual $p$-algebra.
\end{enumerate}
\end{definition}

Let $\mathbb{DBLP}$ denote the variety of double $p$-algebras.


The notion of regularity for double $p$-algebras was also first introduced by Varlet \cite{Va72}.

A double $p$-algebra $\mathbf A$ is {\it regular} if $\mathbf A$ satisfies the following quasi-identity:
\begin{equation} \tag{R}
    x^*=y^* \text{ and } x^+=y^+ \text{  implies  } x=y.  
\end{equation}
The class of regular algebras in $\mathbb{DBLP}$ is denoted by $\mathbb{RDBLP}.$

Varlet \cite{Va72} and Katri\v{n}\'{a}k \cite{Ka73} have  
proved that the regular double $p$-algebras form a subvariety of 
$\mathbb{DBLP}$, defined by the identity:
\begin{equation} \tag{M}
    (x \wedge x^+) \vee (y \vee y^{\ast}) \approx y \vee y^{\ast}.\\
\end{equation}

Brouwer's intuitionism that questioned some of the principles accepted in classical mathematics like the law of excluded middle and that of double negation, as is well-known, led to Heyting algebras, which, in turn, have been recently \cite{Sa08} generalized to  semi-Heyting algebras.  It turns out that semi-Heyting algebras are also closely related to $p$-algebras.   

An algebra $\mathbf{A} = \langle A,\lor, \land, \to,  0, 1)$ is a {\it semi-Heyting algebra} if ${\mathbf A}$ satisfies the following conditions:
\begin{enumerate}[{\rm(SH1)}]
  \item $ (A,\lor, \land, 0, 1)$ is a bounded (distributive) lattice,
  \item ${x \land ( x \to y )\approx x \land y}$,  \label{conditionH2}
  \item ${x \land ( y \to z) \approx x \land [(x \land y) \to (x \land z)]}$, \label{conditionH3}
  \item ${(x \to x\approx 1}$.
\end{enumerate}
A semi-Heyting algebra $\mathbf{A}$ is a {\it Heyting algebra} if ${\mathbf A}$ satisfies the following condition: 

(H) ${(x \land y) \to x\approx 1}$.

\medskip
Let $\mathbb{SH}$ and $\mathbb{H}$ denote, respectively, the varieties of semi-Heyting algebras and Heyting algebras.
It is well-known \cite{Sa08} that if $\mathbf A$ is a semi-Heyting algebra, then $ \langle A, \lor, \land, ^*, 0,1\rangle$ is a distributive p-algebra, where $x^*:= x \to 0$.

\medskip
Combining the notions of Heyting algebra and its dual, we naturally arrive at the notion of a double (or bi-)
Heyting algebra.

\begin{definition}
An algebra ${\bf A} = \langle A,\lor, \land, \to, \leftarrow, 0, 1\rangle$ is a
{\it double Heyting  {\rm(}bi-Heyting, Heyting-Brouwer{\rm)} algebra} if 
\begin{enumerate}[\rm(i)]
	\item $\langle A,\lor, \land, \to, 0, 1\rangle$ is a
	Heyting algebra, and
	\item $\langle A, \land, \lor, \leftarrow, 0, 1\rangle$ is a
	dual Heyting algebra.
\end{enumerate}
\end{definition}

We denote by $\mathbb{DBLH}$ the variety of double Heyting algebras.
If $\mathbf A$ is a double Heyting algebra, then it is well-known that $\langle A, \lor, \land, ^*, ^+, 0,1\rangle$ is a distributive double p-algebra, where $x^*:= x \to 0$ and $x^+:= x \leftarrow 1.$

\smallskip
 An algebra in $\mathbb{DBLH}$ is {\it regular} if it satisfies (M). 
 Let $\mathbb{RDBLH}$ 
 denote the  variety of regular double Heyting algebras.  

Katri\v{n}\'{a}k \cite{Ka73} proved that a regular double p-algebra  can be regarded as a (regular) double Heyting algebra  
 by ingeniously constructing the following binary terms for the Heying implication and its dual:
 \[ \quad \kappa(x,y) := (x^{\ast} \lor y^{\ast \ast}){^{\ast \ast}} \land [(x \lor x^{\ast}){^+} \lor x^{\ast} \lor y \lor y^{\ast}], \]
  \[ \quad \kappa^d(x,y) := (x^{+} \land y^{++}){^{++}} \lor [(x \lor x^{+}){^*} \land x^{+} \land y \land y^{+}]. \] 
We refer to these two binary terms as {\it Katri\v{n}\'{a}k's term} and {\it dual Katri\v{n}\'{a}k's term}, respectively. 
 
 More precisely, Katri\v{n}\'{a}k proved the following theorem (which is the theorem referred to in the title of this paper):
 
 \begin{Theorem} {\rm (Katri\v{n}\'{a}k \cite{Ka73})} \label{K_Theorem}
Let ${\mathbf A} = \langle A,\lor, \land, ^*, ^+,  0, 1)$ be a regular double p-algebra and $a,b \in A$.  
Then ${\mathbf {A_{dbl}}} := \langle A,\lor, \land, \to, \leftarrow, 0, 1)$ is a (regular)  double Heyting algebra, where the operations $\to$ and $\leftarrow$ are defined as follows:
\[a \to b := \kappa(a,b) \quad \mbox{   and    }  \quad a \leftarrow b := \kappa^d (a, b). \]
\end{Theorem}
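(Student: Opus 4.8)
The plan is to establish the Heyting part in full and obtain the dual Heyting part for free by duality. The class of regular double $p$-algebras is self-dual under the order-reversing correspondence interchanging $\lor$ with $\land$, $0$ with $1$, and $^*$ with $^+$, and this correspondence carries $\kappa$ to the dually constructed term $\kappa^d$. Hence once $\langle A,\lor,\land,\to,0,1\rangle$ with $\to:=\kappa$ is shown to be a Heyting algebra, the identical argument in the dual algebra shows that $\langle A,\land,\lor,\leftarrow,0,1\rangle$ with $\leftarrow:=\kappa^d$ is a dual Heyting algebra, and the two together give a double Heyting algebra. To see it is moreover regular, I would first record the computation $\kappa(a,0)=a^*$ (and dually $\kappa^d(a,1)=a^+$); since the identity (M) defining regularity mentions only $\lor,\land,{}^*,{}^+$, and the derived pseudocomplement and dual pseudocomplement of $\mathbf{A_{dbl}}$ coincide with the original $^*$ and $^+$, the identity (M) is inherited from $\mathbf A$, so $\mathbf{A_{dbl}}$ is regular.

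For the Heyting part I would verify the residuation property: for all $a,b,c\in A$, $a\land c\leq b$ if and only if $c\leq\kappa(a,b)$. Since $\langle A,\lor,\land,0,1\rangle$ is already a bounded distributive lattice, this adjunction is exactly what makes $\to=\kappa$ a Heyting implication and immediately yields (SH2)--(SH4) and (H). Throughout I would use two standard facts about the $p$-algebra reduct: the ``skeleton'' $R(A)=\{x^{**}:x\in A\}$ is a Boolean algebra under $\land$, the join $x\sqcup y:=(x\lor y)^{**}$, and complement $x\mapsto x^*$; and the regularity inequality $x\land x^+\leq y\lor y^*$ for all $x,y$, which is a restatement of (M).

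Writing $\kappa(a,b)=P\land Q$ with $P:=(a^*\lor b^{**})^{**}$ and $Q:=(a\lor a^*)^+\lor a^*\lor b\lor b^*$, the forward implication is the easy one and, perhaps surprisingly, needs no regularity. Assuming $a\land c\leq b$ and applying $^{**}$ gives $a^{**}\land c^{**}\leq b^{**}$, so Boolean residuation inside $R(A)$ yields $c\leq c^{**}\leq(a^*\lor b^{**})^{**}=P$. For $c\leq Q$ I would decompose $c$ via the identity $1=(a\lor a^*)\lor(a\lor a^*)^+$, getting $c=(c\land a)\lor(c\land a^*)\lor(c\land(a\lor a^*)^+)$; each summand lies below one of the disjuncts $b$, $a^*$, $(a\lor a^*)^+$ of $Q$ (using $c\land a\leq b$ for the first), whence $c\leq Q$ and so $c\leq\kappa(a,b)$.

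The converse is where regularity is essential and is the main obstacle. It suffices to prove $a\land\kappa(a,b)=a\land b$, since then $c\leq\kappa(a,b)$ gives $a\land c\leq a\land\kappa(a,b)=a\land b\leq b$. A short computation in $R(A)$ gives $a\land P=a\land b^{**}$, and distributing $Q$ across $a\land b^{**}$ collapses all but two terms: $a\land b^{**}\land a^*=0$ and $a\land b^{**}\land b^*=0$ by the pseudocomplement laws, $a\land b^{**}\land b=a\land b$, leaving the correction term $a\land b^{**}\land(a\lor a^*)^+$. The crux is to absorb this term into $b$. Here I would invoke regularity with $x:=a\lor a^*$: since $a\lor a^*$ is \emph{dense}, the inequality $(a\lor a^*)\land(a\lor a^*)^+\leq b\lor b^*$ is an instance of (M), so $a\land(a\lor a^*)^+\leq b\lor b^*$ and therefore $a\land b^{**}\land(a\lor a^*)^+\leq b^{**}\land(b\lor b^*)=b$. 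Combining, $a\land\kappa(a,b)=(a\land b)\lor(a\land b^{**}\land(a\lor a^*)^+)=a\land b$, completing the residuation and hence the proof. The delicate point is that the second conjunct $Q$ of $\kappa$ is precisely the regularity correction invisible at the skeleton level $P$, and the only place it interacts nontrivially with $a$ is through the dense element $a\lor a^*$, which is exactly where (M) has been engineered to apply.
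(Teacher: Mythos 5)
The paper does not prove this theorem at all: it is quoted from Katri\v{n}\'{a}k \cite{Ka73} and used as a black box, so there is no internal proof to compare yours against. Judged on its own, your argument is correct and complete in all essentials. The residuation equivalence $a\land c\leq b \Leftrightarrow c\leq\kappa(a,b)$ is exactly the right thing to prove; your forward direction (Glivenko skeleton for the first conjunct $P$, the decomposition $c=(c\land a)\lor(c\land a^*)\lor(c\land(a\lor a^*)^+)$ for the second conjunct $Q$) checks out, and the computation $a\land P=a\land b^{**}$ via $x\land(x\land y)^*=x\land y^*$ together with the absorption $a\land b^{**}\land(a\lor a^*)^+\leq b^{**}\land(b\lor b^*)=b$ from the instance of (M) at $x:=a\lor a^*$ is precisely where regularity must enter, and does. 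The identification $\kappa(a,0)=a^*$, $\kappa^d(a,1)=a^+$ correctly transfers (M) to $\mathbf{A_{dbl}}$. One caveat you should be aware of: the term $\kappa^d$ as printed in this paper contains the subterm $(x\lor x^{+})^{*}$, which is identically $0$ (since $x\lor x^+=1$), so the printed $\kappa^d$ collapses to $(x^+\land y^{++})^{++}$ and is \emph{not} the order-dual of $\kappa$; the genuine dual has $(x\land x^{+})^{*}$ in that position, and one checks (e.g.\ at $\kappa^d(0,y)$, which must equal $y$ but the printed term gives $y^{++}$) that only the corrected term can work. Your duality argument proves part two for the corrected term, which is surely what is intended, but strictly speaking it does not prove the statement for the formula as displayed; it would be worth flagging the misprint explicitly rather than asserting without comment that the displayed $\kappa^d$ is the dual of $\kappa$.
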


Double p-algebras and double Heyting algebras are closely related to dually pseudocomplemented Heyting algebras-- the study of the latter was initiated in \cite{Sa85} and continued in \cite{Sa12, Sa14, Sa14a}.

An algebra ${\mathbf A} = \langle A,\lor, \land, \to, ^+, 0, 1)$ is a {\it dually pseudocomplemented Heyting algebra} if ${\mathbf A}$ satisfies the following conditions:
\begin{enumerate}
  \item[{\rm(a)}] $ \langle A,\lor, \land, \to, 0, 1\rangle$ is a Heyting algebra,
\item[{\rm(b)}] $\langle A,\lor, \land, ^+, 0, 1)$ is a dual $p$-algebra.
\end{enumerate}

The variety of dually pseudocomplemented Heyting algebras is denoted by $\mathbb{DPCH}$. 
Let $\mathbb{A} \in \mathbb{DPCH}$ and $x \in \mathbb{A}.$  Then we define $x^*:= x \to 0.$   An algebra $\mathbb{A} \in \mathbb{DPCH}$ is {\it regular} if it satisfies (M).
$\mathbb{RDPCH}$ denotes the subvariety of $\mathbb{DPCH}$ consisting of regular dually pseudocomplemented Heyting algebras.

We also denote by $\mathbb{RPCH}^d$ the variety consisting of regular pseudocomplemented dual Heyting algebras.  Note that $\mathbb{RPCH}^d$ consists of algebras dual to the members of $\mathbb{RDPCH}$. \\

Observe that:

\begin{enumerate}[(1)]
	\item If $\mathbf A \in \mathbb{RDBLP}$, then $\langle A, \lor, \land, \to, ^+, 0,1\rangle$ is a regular dually pseudocomplemented Heyting algebra, where $\to$ is as given in Theorem \ref{K_Theorem}.
	
	\item If $\mathbf A \in \mathbb{RDBLP}$, then $\langle A, \lor, \land, \to, \leftarrow, 0,1\rangle$ is a regular double Heyting algebra, where $\to$ and $\leftarrow $ are as given in Theorem \ref{K_Theorem}.
	
	\item If $\mathbf A \in \mathbb{RDPCH}$, then $\langle A, \lor, \land, ^*, ^+, 0,1\rangle$ is a regular double p-algebra. 
	
	
	\item  If $\mathbf A \in \mathbb{RDBLH}$, then $\langle A, \lor, \land, ^*, ^+, 0,1\rangle$ is a regular double $p$-algebra, where $x^*:= x \to 0$  and  $x^+:= x \leftarrow 1.$
	
	\item  If $\mathbf A \in \mathbb{RDBLH}$, then $\langle A, \lor, \land, \to, ^+, 0,1\rangle$ is a regular dually pseudocomplemented Heyting algebra, where $x^+:= x \leftarrow 1.$
	
	\item  If $\mathbf A \in \mathbb{RPCH}^d$, then $\langle A, \lor, \land, ^*,  ^+, 0,1\rangle$ is a regular double $p$-algebra, where $x^+:= x \leftarrow 1.$
	
\end{enumerate}

De Morgan p-algebras were introduced in \cite{Ro81} and further investigated in \cite{Sa86},  \cite{Sa87c} and \cite{Sa14b} as pseudocomplemented De Morgan algebras. 

An algebra $\mathbf A= \langle A, \lor,\land, ^*, ', 0, 1\rangle$ is a De Morgan p-algebra (or pseudocomplemented De Morgan algebra) if 
\begin{enumerate}[(i)]
	\item  $ \langle A, \lor,\land, ^*, 0, 1\rangle$ is a distributive p-algebra,
	\item $ \langle A, \lor,\land, ', 0, 1\rangle$ is a De Morgan algebra.
\end{enumerate}

De Morgan Heyting algebra were introduced in \cite{Sa87a}.

An algebra $\mathbf A= \langle A, \lor,\land, \to, ', 0, 1\rangle$ is a De Morgan Heyting algebra if 
\begin{enumerate}[(i)]
	\item $\langle A, \lor,\land, \to, 0, 1\rangle$ is a Heyting algebra,
	\item  $\langle A, \lor,\land, ', 0, 1\rangle$ is a De Morgan algebra.
\end{enumerate}

We now introduce a new variety of algebras called De Morgan double $p$-algebras.

An algebra $\mathbf A =\langle A, \lor, \land, ^*, ^+, ', 0, 1\rangle$ is a De Morgan double $p$-algebra if the following conditions are satisfied:
\begin{enumerate}[\rm(1)]
	\item $\langle A, \lor. \land, ^*, ^+, 0, 1\rangle$ is a double $p$-algebra,
	\item $\langle A, \lor, \land, ', 0, 1\rangle$ is a De Morgan algebra.
\end{enumerate}

Next, we also introduce another new variety of algebras called De Morgan double Heyting algebras.

An algebra $\mathbf A= \langle A, \lor,\land, \to,  \leftarrow, ', 0, 1\rangle$ is a De Morgan double Heyting algebra if 
\begin{enumerate}[(i)]
	\item $\langle A, \lor,\land, \to,   \leftarrow, 0, 1\rangle$ is a double Heyting algebra,
	\item  $\langle A, \lor,\land, ', 0, 1\rangle$ is a De Morgan algebra.
\end{enumerate}

Let $\mathbb{DMP}$, $\mathbb{DMH}$, $\mathbb{DMDBLP}$ and $\mathbb{DMDBLH}$ denote, respectively, the varieties of De Morgan  $p$-algebras, De Morgan Heyting algebras, De Morgan  double $p$-algebras  and De Morgan double Heyting algebras.  

Romanowska \cite{Ro81} has observed the following:  If $\mathbf A \in \mathbb{DMP}$, then the algebra $\mathbf{A}^{\bf dblp} := \langle A, \lor, \land, ^*, ^+, 0, 1\rangle$, where $a^+:= a'{^*}'$, is a double $p$-algebra, $a^+:= a'{^*}'$ being the dual psedocomplement of $a \in A$.  

Similarly, we observe that if $\mathbf A \in \mathbb{DMH}$, then the algebra \\
$\mathbf{A}^{\bf dblp} := \langle A, \lor, \land, ^*, ^+, 0, 1\rangle$, where 
 $a^* := a \to 0$ and $a^+:= a'{^*}'$, is easily seen to be a double $p$-algebra, $a^+:= a'{^*}'$ being the dual pseudocomplement of $a \in A$.  We also note that if $\mathbf A \in \mathbb{DMDBLP}$, then the reduct  $\langle A, \lor, \land, ^*, ^+, 0, 1\rangle$ is a double $p$-algebra and that if $\mathbf A \in  \mathbb{DMDBLH}$, then the reduct  $\langle A, \lor, \land, ^*, ^+, 0, 1\rangle$, where $a^* := a \to 0$ and $a^+:= a \leftarrow 1$ is a double $p$-algebra.

\smallskip
Let $\mathbb{V} \in \{\mathbb{DMP}, \mathbb{DMH}, \mathbb{DMDBLP}, \mathbb{DMDBLH}\}$.
 An algebra $\mathbf{A} \in \mathbb{V}$ is {\it regular} if it satisfies:
 \begin{equation} \tag{M1}
    (x \wedge x'{^*}') \vee (y \vee y^{\ast}) \approx y \vee y^{\ast}.\\
\end{equation}

 Let $\mathbb{RDMP}$, $\mathbb{RDMH}$, $\mathbb{RDMDBLP}$ and $\mathbb{RDMDBLH}$ denote, respectively, the varieties of regular De Morgan  $p$-algebras, regular De Morgan Heyting algebras, regular De Morgan  double $p$-algebras  and regular De Morgan double Heyting algebras.

Regularity was first examined for the variety  $\mathbb{DMP}$ of De Morgan $p$-algebras (i.e., pseudocomplemented De Morgan algebras) in \cite{Sa86} and for $\mathbb{DMH}$ of De Morgan Heyting algebras in \cite{Sa87a}.

The purpose of this note is two-fold.  Firstly, we prove a converse (see Theorem \ref{Main}) to the above-mentioned  Katrin\'{a}k's theorem (Theorem \ref{K_Theorem}).    
More precisely, as our main theorem, we prove:   

(a)  If $\mathbf A := \langle A, \vee, \wedge, \to, ^+, 0, 1 \rangle 
\in  \mathbb{RDPCH}$, then
$\mathbf A \models x \to y \approx \kappa(x,y)$.

(b) If $\mathbf A := \langle A, \vee, \wedge, ^*, \leftarrow, 0, 1 \rangle 
\in  \mathbb{RPCH}^d$, then
$\mathbf A \models x \leftarrow y \approx \kappa^d (x,y)$ 

 (c) If $\mathbf A := \langle A, \vee, \wedge, \to, \leftarrow, 0, 1 \rangle 
\in  \mathbb{RDBLH}$, then	 
$\mathbf A \models x \to y \approx \kappa(x,y)$, and

\quad \  $\mathbf A \models x \leftarrow y \approx \kappa^d (x,y)$.

\noindent Secondly, we present several applications, both algebraic and logical, of Katrin\'{a}k's theorem (Theorem \ref{K_Theorem}) and our main theorem  
(Theorem \ref{Main}).

 As a first application,  
 we show that the varieties 
$\mathbb{RDBLP}$, $\mathbb{RDPCH}$, $\mathbb{RPCH}^d$, and $\mathbb{RDBLH}$ 
 are term-equivalent to each other.  Consequently, we obtain that the lattice of subvarieties $\mathcal{L}_{\rm V}(\mathbb{RDBLP})$, $\mathcal{L}_{\rm V}(\mathbb{RDPCH})$, $\mathcal{L}_{\rm V}(\mathbb{RPCH}^d)$, and $\mathcal{L}_{\rm V}(\mathbb{RDBLH})$ are isomorphic to one another.  
 
 As a second application, it is shown that the varieties $\mathbb{RDMP}$, 
 $\mathbb{RDMH}$, $\mathbb{RDMDBLH}$ and $\mathbb{RDMDBLP}$ are also term-equivalent to each other. 
 Therefore, the lattice of subvarieties $\mathcal{L}_{\rm V} (\mathbb{RDMP})$, $\mathcal{L}_{\rm V}(\mathbb{RDMH})$ 
 $\mathcal{L}_{\rm V}(\mathbb{RDMDBLH})$, and $\mathcal{L}_{\rm V}(\mathbb{RDMDBLP})$
 are isomorphic to one another.


 
As a third application, 
we deduce, from these results and recent results of \cite{AdSaVc19} and \cite{ AdSaVc20},  that each of the lattices of subvarieties of the varieties
$\mathbb{RDPCH}$, $\mathbb{RDBLH}$, $\mathbb{RPCH}^d$ and $\mathbb{RDMH}$  
 has cardinality $2^{\aleph_0}$. 

As a fourth application, we introduce new logics 
$\mathcal{RDPCH}$ and $\mathcal{RPCH}^d$ 
and show that they are algebraizable with   
$\mathbb{RDPCH}$, and $\mathbb{RPCH}^d$, 
respectively as their equivalent algebraic semantics.  It follows that 
 the logics
 $\mathcal{RDPCH}$, $\mathcal{RPCH}^d$ 
 are equivalent to each other. 
These results, when combined with some results of \cite{CoSa22}, in turn, lead us to conclude that each of the lattices of axiomatic extensions of logics 
$\mathcal{RDPCH}$ and $\mathcal{RPCH}^d$ 
has cardinality $2^{\aleph_0}$, as well.

As a fifth application, we introduce the logic  
$\mathcal{RDMH}$ and show that it is algebraizable  
with 
$\mathbb{RDMH}$ 
 as its equivalent algebraic semantics. 
 It follows that the lattice of axiomatic extensions of the logic $\mathcal{RDMH}$  
 has cardinality $2^{\aleph_0}$.

Before concluding this section, we present the following two well-known lemmas (of folklore) that will be useful in the next section.

\newpage
\begin{Lemma} \label{lemma_properties_H} 
	Let $\mathbf A 
	\in  \mathbb{H}$. Then 
\begin{enumerate}[\rm(a)]
	\item $x^* \leq x \to y$, \label{140420_05}
	\item $y \leq x \to y$,  \label{140420_07}
	\item $x \wedge y \leq z$ if and only if $x \leq y \to z$. \label{160420_01}
\end{enumerate}		
\end{Lemma}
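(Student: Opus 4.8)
The plan is to reduce everything to the residuation (adjunction) property stated in part (c), since parts (a) and (b) will then follow from it almost immediately. Throughout I write $u \leq v$ as an abbreviation for $u \land v \approx u$, and I recall that $x^* := x \to 0$.

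First I would record an auxiliary fact: $a \leq b$ implies $a \to b \approx 1$. Indeed, if $a \leq b$ then $b \land a \approx a$, so substituting $x := b$ and $y := a$ into the Heyting identity (H) gives $a \to b \approx (b \land a) \to b \approx 1$. This is the only place where (H), rather than the bare semi-Heyting axioms, is needed, and it will be the engine behind the forward direction of (c).

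Next I would prove (c). For the implication from right to left, I assume $x \leq y \to z$; then $x \land y \leq (y \to z) \land y$, and by (SH2) the right-hand side equals $y \land z \leq z$, so $x \land y \leq z$. For the converse, I assume $x \land y \leq z$, whence also $x \land y \leq x \land z$, so the auxiliary fact yields $(x \land y) \to (x \land z) \approx 1$. Now (SH3), which reads $x \land (y \to z) \approx x \land [(x \land y) \to (x \land z)]$, collapses to $x \land (y \to z) \approx x \land 1 \approx x$, i.e.\ $x \leq y \to z$, finishing (c).

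Finally, (a) and (b) will drop out of (c) by choosing the right instances. For (b), applying (c) with $x, y, z$ replaced by $y, x, y$ turns the statement into the triviality $y \land x \leq y$, so $y \leq x \to y$. For (a), I first note $x^* \land x \approx x \land (x \to 0) \approx x \land 0 \approx 0 \leq y$ by (SH2), and then (c) converts $x^* \land x \leq y$ into $x^* \leq x \to y$. I expect the only real obstacle to be the forward direction of (c): the key realization is that (SH3) is exactly the identity allowing the generic consequent to be replaced by the localized inequality $x \land y \leq x \land z$, at which point the auxiliary fact forces the relative implication to collapse to $1$; the remaining manipulations are routine bookkeeping with the lattice order and (SH2).
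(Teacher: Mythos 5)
Your proof is correct. The paper itself offers no proof of this lemma: it is stated explicitly as one of ``two well-known lemmas (of folklore)'' and left unproved, so there is nothing to compare your argument against. Your derivation is the standard one for this setting and is worth noting precisely because the paper defines Heyting algebras as semi-Heyting algebras satisfying (H), rather than via residuation: you first recover the residuation law (c) from (SH2), (SH3) and (H) --- the only nontrivial step being that $x\wedge y\leq z$ forces $(x\wedge y)\to(x\wedge z)\approx 1$ via (H), so that (SH3) collapses $x\wedge(y\to z)$ to $x$ --- and then (a) and (b) fall out as instances of (c) using $x\wedge x^{*}\approx 0$ (from (SH2)) and $y\wedge x\leq y$. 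All steps check out against the paper's axiomatization.
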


\begin{Lemma} \label{lemma_properties_RDPCH}
Let $\mathbf A = \langle A, \vee, \wedge, \to, ^+, 0, 1 \rangle 
\in  \mathbb{DPCH}$. Then
\begin{enumerate}[\rm(a)]
	\item $(x \wedge y)^+ \approx x^+ \vee y^+$, \label{140420_02}
	\item $(x \vee y)^* \approx x^* \wedge y^*$, \label{140420_09}
	\item $x^{***} \approx x^*$, \label{140420_10}
	\item $x \leq x^{**}$, \label{160420_03}
	\item $x \vee (x \vee y)^+ \approx x \vee y^+$, \label{140420_03}
	\item $x \wedge (x \wedge y)^* \approx x \wedge y^*.$ \label{140420_11}
\end{enumerate}		
\end{Lemma}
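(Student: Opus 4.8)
The plan is to prove the six identities by exploiting the two reducts of $\mathbf A$. Since $\langle A, \vee, \wedge, \to, 0, 1\rangle$ is a Heyting algebra, hence a semi-Heyting algebra, the reduct $\langle A, \vee, \wedge, ^*, 0, 1\rangle$ with $x^* := x \to 0$ is a distributive $p$-algebra; and by the definition of $\mathbb{DPCH}$ the reduct $\langle A, \vee, \wedge, ^+, 0, 1\rangle$ is a distributive dual $p$-algebra. Thus (b), (c), (d) and (f) are assertions about the $p$-algebra reduct, while (a) and (e) are the order-dual assertions about the dual $p$-algebra reduct. My strategy is to derive (b), (c), (d) and (f) directly from Heyting residuation (Lemma \ref{lemma_properties_H}, part (c)) together with (SH2), and then obtain (a) and (e) by order-duality.

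For the pseudocomplement side I would proceed as follows. For (d), $x \wedge x^* = x \wedge (x \to 0) = x \wedge 0 = 0$ by (SH2), so $x \wedge x^* \leq 0$ and residuation gives $x \leq x^* \to 0 = x^{**}$. For (b), using distributivity, $z \leq (x \vee y)^*$ iff $z \wedge (x \vee y) = 0$ iff $z \wedge x = 0$ and $z \wedge y = 0$ iff $z \leq x^*$ and $z \leq y^*$ iff $z \leq x^* \wedge y^*$, so the two sides have the same principal downset and hence coincide. For (c), part (d) gives $x \leq x^{**}$, and applying the antitone operation $^*$ yields $x^{***} \leq x^*$; applying (d) to $x^*$ gives $x^* \leq (x^*)^{**} = x^{***}$, whence equality. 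For (f), one inclusion is antitonicity ($x \wedge y \leq y$ forces $y^* \leq (x \wedge y)^*$, so $x \wedge y^* \leq x \wedge (x \wedge y)^*$); for the reverse, $y \wedge x \wedge (x \wedge y)^* = (x \wedge y) \wedge (x \wedge y)^* = 0$ by (SH2), so $x \wedge (x \wedge y)^* \leq y \to 0 = y^*$, and combining with $x \wedge (x\wedge y)^* \leq x$ yields the claim.

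It remains to establish (a) and (e). The key observation is that these are precisely the order-duals of (b) and (f): interchanging $\wedge$ with $\vee$, $0$ with $1$, and $^*$ with $^+$ turns (b) into (a) and (f) into (e). Since the reduct $\langle A, \vee, \wedge, ^+, 0, 1\rangle$ is a distributive dual $p$-algebra, that is, the order-dual of a distributive $p$-algebra, and since (b) and (f) hold in every distributive $p$-algebra by the first part, their order-duals (a) and (e) hold in every distributive dual $p$-algebra, in particular in this reduct. Alternatively, (a) and (e) can be proved directly from the defining adjunction of the dual pseudocomplement, namely $x^+ \leq z$ iff $x \vee z = 1$, by arguments dual to those given for (b) and (f).

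I anticipate that the only real subtlety---rather than a genuine obstacle---is the passage to the dual in the last paragraph: because $\mathbf A$ carries a Heyting implication $\to$ but no dual implication, the full algebra is not self-dual, so the duality argument must be confined to the $^+$-reduct, where it is legitimate precisely because that reduct is a bona fide distributive dual $p$-algebra. Everything else is a routine application of residuation, antitonicity and (SH2).
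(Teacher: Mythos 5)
Your proof is correct. Note, however, that the paper offers no proof of this lemma at all: it is stated as one of ``two well-known lemmas (of folklore)'' and used without justification, so there is nothing to compare against line by line. Your argument is the standard one and fills the gap soundly: parts (b), (c), (d), (f) follow from the residuation property of $\to$ specialized at $0$ (which is exactly the defining maximality property of the pseudocomplement, so the arguments are really $p$-algebra arguments, e.g.\ (f) is Ribenboim's (R10)), and parts (a), (e) are their order-duals, legitimately transferred to the $^+$-reduct since that reduct is by definition a distributive dual $p$-algebra and the identities involved mention only $\vee$, $\wedge$, $^+$, $0$, $1$. Your closing caveat --- that the duality must be confined to the $^+$-reduct because the full algebra, carrying $\to$ but no dual implication, is not self-dual --- is exactly the right point to flag.
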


\vspace{.5cm}
\section{A Converse to Katrin\'{a}k's theorem (Theorem \ref{K_Theorem})}    

In this section we prove our main theorem--a converse to Theorem \ref{K_Theorem}.  To achieve this goal, the following lemmas are crucial.

\begin{Lemma} \label{lemma_properties_alphabetagamma}
	Let $\mathbf A = \langle A, \vee,  \wedge, \to, ^+, 0, 1 \rangle \in \mathbb{RDPCH}$ and $a,b \in A$. Let 
$$\alpha = (a^* \vee b^{**})^{**}, \ \ \ \beta = (a \vee a^*)^+ \ \ \ \mbox{ and } \ \ \ \gamma = a^* \vee b \vee b^*.$$
	Then
\begin{enumerate}[\rm(a)]
	\item $(a \to b) \vee \beta = (a \to b) \vee a^+$, \label{140420_04}
	\item $(a \to b) \vee \gamma = (a \to b) \vee b^*$, \label{140420_06}
	\item $\alpha \wedge a^+ \leq a \to b$, \label{160420_02}
	\item $(a \to b)^+ \leq (\alpha \wedge (\beta \vee \gamma))^+$, \label{140420_01}
	\item $a \to b \leq \alpha$, \label{160420_04}
	\item $\alpha \vee (a \to b)^+ = 1$,  \label{160420_05}
	\item $(a \to b)^+  \vee \beta \vee \gamma = 1$, \label{160420_06} 
	\item $(a \to b)^+ \geq (\alpha \wedge (\beta \vee \gamma))^+$, \label{160420_07} 
	\item $(a \to b)^* \leq (\alpha \wedge (\beta \vee \gamma))^*$, \label{160420_08} 
	\item $(\alpha \wedge (\beta \vee \gamma))^* \leq (a \to b)^*$. \label{160420_09}
\end{enumerate}	
\end{Lemma}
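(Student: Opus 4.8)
The plan is to deduce all ten items from the Heyting adjunction together with the (dual) pseudocomplement laws of Lemmas~\ref{lemma_properties_H} and~\ref{lemma_properties_RDPCH}, invoking regularity~(M) at a single point. Throughout I will use freely that $^*,^+$ are antitone, that $x\wedge x^*=0$ and $x\vee x^+=1$, that $x\le z^{**}\iff x\wedge z^*=0$, and that $z^+$ is the \emph{least} $w$ with $z\vee w=1$ while $z^*$ is the \emph{largest} $w$ with $z\wedge w=0$. Writing $\kappa=\alpha\wedge(\beta\vee\gamma)$ (so that $\kappa=\kappa(a,b)$), I first record three rewritings of $\alpha$: from $(x\vee y)^*\approx x^*\wedge y^*$ and $x^{***}\approx x^*$ one gets $(a^*\vee b^{**})^*=a^{**}\wedge b^*$, hence $\alpha=(a^{**}\wedge b^*)^*$ and $\alpha^*=a^{**}\wedge b^*$; then $x\wedge(x\wedge y)^*\approx x\wedge y^*$ yields $b^*\wedge\alpha=b^*\wedge a^*$; and $\gamma^*=(a^*\vee b\vee b^*)^*=a^{**}\wedge b^*\wedge b^{**}=0$.

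Items (a), (b), (e) are then quick. For (a), the law $x\vee(x\vee y)^+\approx x\vee y^+$ with $x=a^*,\ y=a$ gives $a^*\vee\beta=a^*\vee a^+$; joining with $a\to b$ and absorbing $a^*\le a\to b$ gives (a). For (b), $a^*\le a\to b$ and $b\le a\to b$ collapse $\gamma$ to $b^*$ after joining with $a\to b$. For (e), since $\alpha^*=a^{**}\wedge b^*$ it suffices to show $(a\to b)\wedge a^{**}\wedge b^*=0$; letting $t$ denote this meet, $t\wedge a\le(a\wedge b)\wedge b^*=0$, so $t\le a^*$, and since also $t\le a^{**}$ we get $t\le a^*\wedge a^{**}=0$.

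The crux is (c), and it is the only place regularity enters. By the adjunction it suffices to prove $a\wedge\alpha\wedge a^+\le b$. Here (M) gives $a\wedge a^+\le b\vee b^*$, so $a\wedge\alpha\wedge a^+\le(a\wedge\alpha\wedge b)\vee(a\wedge\alpha\wedge b^*)$; the first term is $\le b$ and the second equals $a\wedge(b^*\wedge\alpha)=a\wedge a^*\wedge b^*=0$ by the preliminary rewriting. Running the same computation with $\beta\le a^+$ for the $\beta$-part and with $a\wedge\gamma=(a\wedge b)\vee(a\wedge b^*)$ for the $\gamma$-part upgrades this to $a\wedge\kappa\le b$, i.e.\ $\kappa\le a\to b$; antitonicity of $^+$ and of $^*$ then gives (d) $(a\to b)^+\le\kappa^+$ and (i) $(a\to b)^*\le\kappa^*$ at once.

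It remains to establish the reverse inequalities. From (e), $\alpha^+\le(a\to b)^+$, so $\alpha\vee(a\to b)^+\ge\alpha\vee\alpha^+=1$, which is (f). For (g), I would write $a=a\wedge\bigl((a\to b)\vee(a\to b)^+\bigr)=(a\wedge b)\vee\bigl(a\wedge(a\to b)^+\bigr)\le\gamma\vee(a\to b)^+$ (using $a\wedge(a\to b)=a\wedge b\le\gamma$), combine with $a^*\le\gamma$ to get $a\vee a^*\le\gamma\vee(a\to b)^+$, and then use $\beta=(a\vee a^*)^+$ to conclude $\beta\vee\gamma\vee(a\to b)^+=1$. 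Then (h) follows by distributing $\kappa\vee(a\to b)^+=\bigl(\alpha\vee(a\to b)^+\bigr)\wedge\bigl((\beta\vee\gamma)\vee(a\to b)^+\bigr)=1$ via (f),(g) and invoking the leastness of $\kappa^+$. Finally, for (j), $a\to b\le\alpha$ and $x\wedge(x\wedge y)^*\approx x\wedge y^*$ reduce $(a\to b)\wedge\kappa^*$ to $(a\to b)\wedge(\beta\vee\gamma)^*=(a\to b)\wedge\beta^*\wedge\gamma^*=0$, since $\gamma^*=0$; the maximality of $(a\to b)^*$ then yields $\kappa^*\le(a\to b)^*$. The principal obstacle is the computation in (c): one must first coax $\alpha$ into the shape $(a^{**}\wedge b^*)^*$ so that the absorption law $x\wedge(x\wedge y)^*\approx x\wedge y^*$ applies, and then use~(M) to tame the boundary element $a\wedge a^+$ (which is exactly where the $^+$-term $\beta$ is controlled).
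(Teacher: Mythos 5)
Your proof is correct, and its overall skeleton matches the paper's: rewrite $\alpha$ as $(a^{**}\wedge b^{*})^{*}$, prove (c) through the Heyting adjunction with regularity (M) entering exactly there, obtain the $^+$-comparisons from the minimality of the dual pseudocomplement and the $^*$-comparisons from the maximality of the pseudocomplement. Where you genuinely diverge is in (d) and (i): the paper proves (d) by a long join computation showing $(a\to b)\vee\bigl(\alpha\wedge(\beta\vee\gamma)\bigr)^{+}=1$ and proves (i) by a separate meet computation resting on the auxiliary inequality $(a\to b)^{*}\le a^{**}\wedge b^{*}$, whereas you upgrade the computation in (c) to the single inequality $\alpha\wedge(\beta\vee\gamma)\le a\to b$ (using $\beta\le a^{+}$ and $a\wedge\gamma=(a\wedge b)\vee(a\wedge b^{*})$) and then read off both (d) and (i) at once by antitonicity of $^{+}$ and $^{*}$. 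This is shorter and also more informative: it makes explicit that one half of the eventual identity $a\to b\approx\kappa(a,b)$ is already a plain inequality at this stage, so that the quasi-identity (R) is only needed to reverse it. Your arguments for (e) and (j) likewise bypass the paper's appeals to the semi-Heyting axiom (SH3), using instead that $\alpha$ is a closed element (so $x\le\alpha$ iff $x\wedge\alpha^{*}=0$) together with $x\wedge(x\wedge y)^{*}\approx x\wedge y^{*}$ and $\gamma^{*}=0$; and your (g) replaces the paper's $(x\wedge y)^{+}\approx x^{+}\vee y^{+}$ manipulation by the decomposition $a=(a\wedge b)\vee\bigl(a\wedge(a\to b)^{+}\bigr)$. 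Items (a), (b), (f), (h) agree with the paper's proofs up to cosmetic reorganization. I find no gaps.
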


\begin{proof}$\ $
	
\begin{itemize}
\item[(\ref{140420_04})]	
\noindent $(a \to b) \vee \beta	$
$\overset{ def\  of\   \beta 
}{=}  (a \to b) \vee (a \vee a^*)^+ $
$\overset{   \ref{lemma_properties_RDPCH} (\ref{140420_03}) 
}{=}  (a \to b) \vee ((a \to b) \vee a \vee a^*)^+ $
$\overset{   \ref{lemma_properties_H} (\ref{140420_05}) 
}{=}  (a \to b) \vee ((a \to b) \vee a)^+ $
$\overset{   \ref{lemma_properties_RDPCH} (\ref{140420_03}) 
}{=}  (a \to b) \vee a^+ $.
	
\item[(\ref{140420_06})]
\noindent $(a \to b) \vee  \gamma $
$\overset{ def\  of\   \gamma 
}{=}  (a \to b) \vee a^* \vee b \vee b^*  $
$\overset{   \ref{lemma_properties_H} (\ref{140420_05}) 
}{=}  (a \to b) \vee b \vee b^* $
$\overset{   \ref{lemma_properties_H} (\ref{140420_07}) 
}{=}  (a \to b) \vee b^* $.

\item[(\ref{160420_02})]
First observe that 
\noindent $(a^* \vee b^{**})^{**} $
$\overset{   \ref{lemma_properties_RDPCH} (\ref{140420_09}) 
}{=}  (a^{**} \wedge b^{***})^{*} $
$\overset{   \ref{lemma_properties_RDPCH} (\ref{140420_10})
}{=}  (a^{**} \wedge b^{*})^{*} $. 
Hence
\begin{equation} \label{equation_160420_01}  
(a^* \vee b^{**})^{**} = (a^{**} \wedge b^{*})^{*} .
\end{equation}	

 Observe that
$$
\begin{array}{lcll}
a \wedge a^+ \wedge \alpha & = & a \wedge a^+ \wedge (a^* \vee b^{**})^{**} & \mbox{definition of } \alpha \\
& = & a \wedge a^+ \wedge (a^{**} \wedge b^{*})^{*} & \mbox{by (\ref{equation_160420_01})} \\
& = & a \wedge a^+ \wedge (a \wedge a^{**} \wedge b^{*})^{*}  & \mbox{by Lemma \ref{lemma_properties_RDPCH} (\ref{140420_11})} \\
& = & a \wedge a^+ \wedge (a  \wedge b^{*})^{*} & \mbox{by Lemma \ref{lemma_properties_RDPCH} (\ref{160420_03})} \\
& = & a \wedge a^+ \wedge b^{**} & \mbox{by Lemma \ref{lemma_properties_RDPCH} (\ref{140420_11})} \\
& \leq  & b & \mbox{by Lemma \ref{lemma_properties_RDPCH} (\ref{160420_03})} 
\end{array}
$$
Hence
$a^+ \wedge \alpha \leq a \to b$ by Lemma \ref{lemma_properties_H} (\ref{160420_01}).

	\item[(\ref{140420_01})] 	
 
\noindent $	$
$\overset{  
}{}  (a \to b) \vee (\alpha \wedge (\beta \vee \gamma))^+ $
$\overset{   \ref{lemma_properties_RDPCH} (\ref{140420_02}) 
}{=}  (a \to b) \vee \alpha^+ \vee (\beta \vee \gamma)^+ $
$\overset{   \ref{lemma_properties_RDPCH} (\ref{140420_03}) 
}{=}  (a \to b) \vee \alpha^+ \vee ((a \to b) \vee \beta \vee \gamma)^+ $
$\overset{  (\ref{140420_04}) 
}{=}  (a \to b) \vee \alpha^+ \vee ((a \to b) \vee a^+ \vee \gamma)^+ $
$\overset{  (\ref{140420_06}) 
}{=}  (a \to b) \vee \alpha^+ \vee ((a \to b) \vee a^+ \vee b^*)^+ $
$\overset{   \ref{lemma_properties_RDPCH} (\ref{140420_03}) 
}{=}  (a \to b) \vee \alpha^+ \vee (a^+ \vee b^*)^+ $
$\overset{   \ref{lemma_properties_RDPCH} (\ref{140420_02}) 
}{=}  (a \to b) \vee (\alpha \wedge (a^+ \vee b^*))^+ $
$\overset{  
}{=}  (a \to b) \vee ((\alpha \wedge a^+) \vee (\alpha \wedge b^*))^+ $
$\overset{  
}{=}  (a \to b) \vee ((\alpha \wedge a^+) \vee ((a^* \vee b^{**})^{**} \wedge b^*))^+ $
$\overset{  (\ref{equation_160420_01}) 
}{=}  (a \to b) \vee ((\alpha \wedge a^+) \vee ((a^{**} \wedge b^{*})^{*} \wedge b^*))^+ $
$\overset{   \ref{lemma_properties_RDPCH} (\ref{140420_11}) 
}{=}  (a \to b) \vee ((\alpha \wedge a^+) \vee (b^* \wedge a^{***}))^+ $
$\overset{   \ref{lemma_properties_RDPCH} (\ref{140420_10}) 
}{=}  (a \to b) \vee ((\alpha \wedge a^+) \vee (b^* \wedge a^{*}))^+ $
$\overset{   \ref{lemma_properties_RDPCH} (\ref{140420_03}) 
}{=}  (a \to b) \vee ((a \to b) \vee (\alpha \wedge a^+) \vee (b^* \wedge a^{*}))^+ $
$\overset{  (\ref{160420_02}) 
}{=}  (a \to b) \vee ((a \to b)  \vee (b^* \wedge a^{*}))^+ $
$\overset{   \ref{lemma_properties_H} (\ref{140420_05}) 
}{=}  (a \to b) \vee (a \to b)^+ $
$\overset{ 
}{=}  1 $.
Hence $(a \to b)^+ \leq (\alpha \wedge (\beta \vee \gamma))^+$.

\item[(\ref{160420_04})]
\noindent $(a \to b) \wedge \alpha $
$\overset{ def\  of\   \alpha 
}{=}  (a \to b) \wedge (a^* \vee b^{**})^{**} $
$\overset{  (\ref{equation_160420_01}) 
}{=}  (a \to b) \wedge  (a^{**} \wedge b^{*})^{*} $
$\overset{   \ref{lemma_properties_RDPCH} (\ref{140420_11}) 
}{=}  (a \to b) \wedge  [(a \to b) \wedge a^{**} \wedge b^{*}]^{*}  $
$\overset{
}{=}  (a \to b) \wedge  [b^{*} \wedge (a \to b) \wedge a^{**} ]^{*}  $
$\overset{  (H\ref{conditionH3}) 
}{=}  (a \to b) \wedge  [b^{*} \wedge \{(a \wedge b^{*}) \to (b \wedge b^{*})\} \wedge a^{**}]^{*} $
$\overset{  
}{=}  (a \to b) \wedge  [b^{*} \wedge \{(a \wedge b^{*}) \to 0\} \wedge a^{**}]^{*} $
$\overset{  
}{=}  (a \to b) \wedge  [b^{*} \wedge \{(a \wedge b^{*}) \to (0 \wedge b^{*})\} \wedge a^{**}]^{*} $
$\overset{  (H\ref{conditionH3}) 
}{=}  (a \to b) \wedge [b^{*} \wedge (a \to 0) \wedge a^{**}]^{*} $
$\overset{  
}{=} (a \to b) \wedge (b^* \wedge a^{*} \wedge a^{**})^{*} $
$\overset{  
}{=}  (a \to b) \wedge 0^{*} $
$\overset{  
}{=}  (a \to b) $.

\item[(\ref{160420_05})]
By item (\ref{160420_04}) we know that $a \to b \leq \alpha$. Then $\alpha^+ \leq (a \to b)^+$. Therefore $1 = \alpha \vee \alpha^+ \leq \alpha \vee (a \to b)^+$.

\item[(\ref{160420_06})]
\noindent $(a \to b)^+  \vee \beta \vee \gamma $
$\overset{ def\  of\  \beta 
}{=}  (a \to b)^+  \vee (a \vee a^*)^+ \vee \gamma $
$\overset{   \ref{lemma_properties_RDPCH} (\ref{140420_02}) 
}{=}  [(a \to b) \wedge (a \vee a^*)]^+  \vee \gamma $
$\overset{  
}{=}  [\{(a \to b) \wedge a\} \vee \{(a \to b) \wedge a^*\}]^+  \vee \gamma $
$\overset{  (H\ref{conditionH2}) 
}{=}  ((b \wedge a) \vee ((a \to b) \wedge a^*))^+ \vee \gamma $
$\overset{   \ref{lemma_properties_H} (\ref{140420_05}) 
}{=}  ((b \wedge a) \vee a^*)^+ \vee \gamma $
$\overset{   \ref{lemma_properties_RDPCH} (\ref{140420_03}) 
}{=}  ((b \wedge a) \vee a^* \vee \gamma)^+ \vee \gamma $
$\overset{ def\  of\  \gamma 
}{=}  ((b \wedge a) \vee a^* \vee a^* \vee b \vee b^*)^+ \vee \gamma $
$\overset{  
}{=} ( a^* \vee b \vee b^*)^+ \vee \gamma $
$\overset{  def\  of\  \gamma 
}{=}  \gamma^+ \vee \gamma $
$\overset{  
}{=}  1 $.

\item[(\ref{160420_07})]
In view of items (\ref{160420_05}) and (\ref{160420_06}) we know that $(a \to b)^+ \vee [\alpha \wedge (\beta \vee \gamma)] = [\alpha \vee (a \to b)^+] \wedge [(a \to b)^+  \vee \beta \vee \gamma] = 1$. Hence $(a \to b)^+ \geq [\alpha \wedge (\beta \vee \gamma)]^+$.

\item[(\ref{160420_08})] By Lemma \ref{lemma_properties_H} (\ref{140420_05}), we have $a^* \leq a \to b$. Hence $a^* \wedge (a \to b)^* \leq (a \to b) \wedge (a \to b)^* = 0$ and, consequently, $(a \to b)^* \leq a^{**}$. Similarly, since $b \leq a \to b$ by Lemma \ref{lemma_properties_H} (\ref{140420_07}), we get $b \wedge (a \to b)^* = 0$. Therefore $(a \to b)^* \leq b^*$. Hence,
\begin{equation} \label{equation_160420_03}
(a \to b)^*  \leq a^{**} \wedge b^*.
\end{equation}
Since
\noindent $(a \to b)^* \wedge \alpha $
$\overset{ def\  of\  \alpha 
}{=}  (a \to b)^* \wedge (a^* \vee b^{**})^{**} $
$\overset{  (\ref{equation_160420_01}) 
}{=}  (a \to b)^* \wedge  (a^{**} \wedge b^{*})^{*} $
$\overset{   \ref{lemma_properties_RDPCH} (\ref{140420_11}) 
}{=}  (a \to b)^* \wedge  [(a \to b)^* \wedge a^{**} \wedge b^{*}]^{*} $
$\overset{  (\ref{equation_160420_03}) 
}{=}  (a \to b)^* \wedge (a \to b)^{**}  $
$\overset{  
}{=}  0, $
we have that $(a \to b)^* \wedge [\alpha \wedge (\beta \vee \gamma)]  \leq (a \to b)^* \wedge \alpha = 0$. Therefore 
$(a \to b)^* \leq (\alpha \wedge (\beta \vee \gamma))^*$.

\item[(\ref{160420_09})]
From
\noindent $(a \to b) \wedge (\alpha \wedge \gamma)^* $
$\overset{  
}{=}  (a \to b) \wedge ((\alpha \wedge \gamma) \to 0) $
$\overset{  (H\ref{conditionH3}) 
}{=}  (a \to b) \wedge [\{(a \to b) \wedge \alpha \wedge \gamma\} \to \{(a \to b) \wedge 0\}] $
$\overset{ 
	(\ref{160420_04}) 
}{=}  (a \to b) \wedge [\{(a \to b) \wedge \gamma\} \to \{(a \to b) \wedge 0\}] $
$\overset{  (H\ref{conditionH3}) 
}{=}  (a \to b) \wedge \gamma^* $
$\overset{ def\  of\  \gamma 
}{=}  (a \to b) \wedge (a^* \vee b \vee b^*)^* $
$\overset{   \ref{lemma_properties_RDPCH} (\ref{140420_09}) 
}{=}  (a \to b) \wedge (a^{**} \wedge b^* \wedge b^{**}) $
$\overset{  
}{=}  0, $
we conclude that
\begin{equation} \label{equation_160420_04}
(a \to b) \wedge (\alpha \wedge \gamma)^* = 0.
\end{equation}
Also, in view of Lemma \ref{lemma_properties_RDPCH} (\ref{140420_09}), we have
\noindent $(a \to b) \wedge [\alpha \wedge (\beta \vee \gamma)]^* $
$\overset{  
}{=}  (a \to b) \wedge [(\alpha \wedge \beta) \vee (\alpha \wedge \gamma)]^* $
$\overset{  (\ref{equation_160420_04}).
}{=}  0 $. 
\noindent Consequently, $[\alpha \wedge (\beta \vee \gamma)]^* \leq (a \to b)^*$.
\end{itemize}	
\end{proof}

We are ready to present our main theorem of this paper.

\begin{Theorem} \label{Main}
\begin{thlist} 
	\item[a] Let $\mathbf A  := \langle A, \vee, \wedge,  \rightarrow, ^+, 0, 1 \rangle \in \mathbb{RDPCH}$.  Then 	$\mathbf A \models x \to y \approx \kappa(x,y)$.
	\item[b] Let $\mathbf A  := \langle A, \wedge, \vee,  ^*, \leftarrow,  1, 0 \rangle \in  \mathbb{RPCH}^d$.  Then 	$\mathbf A \models x \leftarrow y \approx \kappa^d (x,y)$.
	\item[c]
 Let $\mathbf A := \langle A, \vee, \wedge, \to, \leftarrow, 0, 1 \rangle 
\in  \mathbb{RDBLH}$. Then
\begin{thlist} 
\item[i] $\mathbf A \models x \to y \approx \kappa(x,y)$,
\item[ii] $\mathbf A \models x \leftarrow y \approx \kappa^d (x,y)$. 
\end{thlist}

\end{thlist}
\end{Theorem}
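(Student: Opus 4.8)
The plan is to harvest Lemma~\ref{lemma_properties_alphabetagamma}, in which the combinatorial heavy lifting has already been done, and combine it with the regularity quasi-identity (R). The crucial preliminary observation is purely notational: substituting $x=a$, $y=b$ into Katri\v{n}\'{a}k's term and regrouping the disjuncts gives
$$\kappa(a,b) = (a^* \vee b^{**})^{**} \wedge [(a \vee a^*)^+ \vee a^* \vee b \vee b^*] = \alpha \wedge (\beta \vee \gamma),$$
so that $\kappa(a,b)$ is exactly the element $\alpha \wedge (\beta \vee \gamma)$ that pervades Lemma~\ref{lemma_properties_alphabetagamma}. For part~(a) I would then read off from that lemma that the two sides share both complements: items~\ref{lemma_properties_alphabetagamma}(\ref{140420_01}) and~(\ref{160420_07}) together give $(a\to b)^+ = (\alpha \wedge (\beta \vee \gamma))^+ = \kappa(a,b)^+$, while items~(\ref{160420_08}) and~(\ref{160420_09}) together give $(a\to b)^* = (\alpha \wedge (\beta \vee \gamma))^* = \kappa(a,b)^*$. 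Since $\mathbf A$ satisfies~(R), the agreement of both the pseudocomplement and the dual pseudocomplement forces $a \to b = \kappa(a,b)$; as $a,b$ are arbitrary, $\mathbf A \models x \to y \approx \kappa(x,y)$.

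Part~(b) I would obtain by the order-dual of this argument. The variety $\mathbb{RPCH}^d$ is by definition the class of algebras dual to those of $\mathbb{RDPCH}$, and $\kappa^d$ is the term obtained from $\kappa$ by the dual substitution $\vee \leftrightarrow \wedge$, $0 \leftrightarrow 1$, ${}^* \leftrightarrow {}^+$, $\to \leftrightarrow \leftarrow$. Dualizing Lemma~\ref{lemma_properties_alphabetagamma} verbatim therefore yields $(a \leftarrow b)^* = \kappa^d(a,b)^*$ and $(a \leftarrow b)^+ = \kappa^d(a,b)^+$, and regularity again delivers $a \leftarrow b = \kappa^d(a,b)$.

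Finally, part~(c) reduces to the first two parts by passing to reducts. If $\mathbf A \in \mathbb{RDBLH}$, then the reduct $\langle A, \vee, \wedge, \to, {}^+, 0, 1\rangle$ with $x^+ := x \leftarrow 1$ lies in $\mathbb{RDPCH}$ (item~(5) of the list following Theorem~\ref{K_Theorem}), so part~(a) gives~(i); dually, the reduct $\langle A, \vee, \wedge, {}^*, \leftarrow, 0, 1\rangle$ with $x^* := x \to 0$ lies in $\mathbb{RPCH}^d$, so part~(b) gives~(ii). At the level of the theorem itself no genuine obstacle remains, since all the real difficulty has already been absorbed into Lemma~\ref{lemma_properties_alphabetagamma}; the only point deserving care is confirming that the dualization used in part~(b) is literal, i.e.\ that each step of that lemma dualizes cleanly and that $\kappa^d$ really is the order-dual of $\kappa$ --- but this is automatic once $\mathbb{RPCH}^d$ is taken to be the dual variety.
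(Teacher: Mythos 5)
Your proposal is correct and follows essentially the same route as the paper: identify $\kappa(a,b)$ with $\alpha\wedge(\beta\vee\gamma)$, use Lemma \ref{lemma_properties_alphabetagamma} items (\ref{140420_01}), (\ref{160420_07}) and (\ref{160420_08}), (\ref{160420_09}) to match the dual pseudocomplements and pseudocomplements, and invoke regularity (R) to conclude, with (b) by duality and (c) by reduction to (a) and (b).
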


\begin{proof}
The identity $(x \to y)^*  \approx ((x^* \lor y^{**})^{**}  \land  ((x \lor x^*)' \lor (x^* \lor (y \lor y^*))))^*$ is valid in $\mathbf A$, in view of items (\ref{160420_08}) and (\ref{160420_09})  of Lemma \ref{lemma_properties_alphabetagamma}.
Also, by items (\ref{140420_01}) and (\ref{160420_07}) of Lemma \ref{lemma_properties_alphabetagamma}  we can easily verify that the identity $(x \to y)^+  \approx  ((x^* \lor y^{**})^{**}  \land  ((x \lor x^*)' \lor (x^* \lor (y \lor y^*))))^+$ is true in $\mathbf A$.	
Therefore, by (R), we conclude that the identity $x \to y  \approx (x^* \lor y^{**})^{**}  \land  ((x \lor x^*)' \lor (x^* \lor (y \lor y^*)))$ holds in $\mathbf A$, thus proving (a).   A dual argument will prove (b), while (c) follows from (a) and (b).   
\end{proof}

We now give some applications of Theorem \ref{K_Theorem} and Theorem \ref{Main}. 


\begin{Corollary} \label{CorA}
	$\mathbb{RDBLP}$ and $\mathbb{RDBLH}$ are term-equivalent to each other.  More explicitly,
	\begin{enumerate}
		\item[{\rm(a)}] For $\mathbf A = \langle A, \lor, \land, {^*}^{\mathbf{A}}, {^+}^{\mathbf{A}}, 0, 1 \rangle    \in \mathbb{RDBLP}$, let $\mathbf{A}^{dblh} := \langle A, \lor, \land, \to, \leftarrow, 0, 1 \rangle$, where $\to  \ := \kappa(x,y)$ and $\leftarrow \ := \kappa^d (x,y)$. 
		Then $\mathbf{A}^{dblh} \in \mathbb{RDBLH}$.
		
		\item[{\rm(b)}] For $\mathbf A = \langle A, \lor, \land, \to^{\mathbf{A}}, \leftarrow^{\mathbf{A}}, 0, 1 \rangle \in \mathbb{RDBLH}$, let $\mathbf{A}^{dblp}:= \langle A, \lor, \land, ^*, ^+, 0, 1 \rangle$, where ${^*}^\mathbf{A}$ is defined by $x{^*}^\mathbf{A} := x \to^{\mathbf A} 0 $ and ${^+}^\mathbf{A} $ is defined by:\\
		 $x{^+}^{\mathbf A} := x \leftarrow^{\mathbf A} 1 $.     
		Then $\mathbf{A}^{dblp} \in \mathbb{RDBLP}$.
		
		\item[{\rm(c)}] If $\mathbf A \in \mathbb{RDBLP}$, then $\mathbb(A^{{dblh}})^{dblp}  = \mathbf A$.
		\item[{\rm(d)}]  If $\mathbf A \in \mathbb{RDBLH}$, then $\mathbf(A^{dblp})^{dblh}  = \mathbf A$.
	\end{enumerate}
\end{Corollary}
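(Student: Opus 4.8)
The plan is to dispatch the four parts by pairing the two main theorems with one short, self-dual computation; almost no new work is needed inside the corollary itself. Part~(a) is merely a restatement of Katri\v{n}\'{a}k's theorem (Theorem~\ref{K_Theorem}): starting from $\mathbf A\in\mathbb{RDBLP}$ and adjoining $\to\,:=\kappa(x,y)$ and $\leftarrow\,:=\kappa^d(x,y)$ yields a member of $\mathbb{RDBLH}$. Part~(b) is equally immediate from the observation recorded in the introduction that the $(^*,{}^+)$-reduct of any regular double Heyting algebra, with $x^*:=x\to 0$ and $x^+:=x\leftarrow 1$, is a regular double $p$-algebra.

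For part~(c) I would note first that both constructions leave $\vee,\wedge,0,1$ untouched, so only the unary operations must be checked. Since the $dblp$-construction defines the pseudocomplement and dual pseudocomplement of $\mathbf A^{dblh}$ as $\kappa(x,0)$ and $\kappa^d(x,1)$, it suffices to verify the two identities $\kappa(x,0)\approx x^*$ and $\kappa^d(x,1)\approx x^+$ in $\mathbf A$. The first is a one-line computation: substituting $y=0$ and using $0^*=1$ and $0^{**}=0$ collapses the first conjunct of $\kappa$ to $x^{***}$ and forces the bracketed join to equal $1$ (it already contains $0^*=1$), whence $\kappa(x,0)=x^{***}=x^*$ by the $p$-algebra identity $x^{***}\approx x^*$ of Lemma~\ref{lemma_properties_RDPCH}(\ref{140420_10}). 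The second identity, $\kappa^d(x,1)=x^{+++}=x^+$, follows by the order-dual computation (substituting $y=1$ and using $1^+=0$, $1^{++}=1$). Hence $(\mathbf A^{dblh})^{dblp}=\mathbf A$.

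Part~(d) is where the novelty of the paper enters, and here I would invoke the main theorem directly. Given $\mathbf A\in\mathbb{RDBLH}$, the algebra $\mathbf A^{dblp}$ carries $x^*=x\to 0$ and $x^+=x\leftarrow 1$, and then $(\mathbf A^{dblp})^{dblh}$ equips the same lattice with the operations $\kappa(x,y)$ and $\kappa^d(x,y)$ built from these $^*,{}^+$. Theorem~\ref{Main}(c) asserts precisely that in any member of $\mathbb{RDBLH}$ one has $x\to y\approx\kappa(x,y)$ and $x\leftarrow y\approx\kappa^d(x,y)$, so the reconstructed implications coincide with the originals and $(\mathbf A^{dblp})^{dblh}=\mathbf A$.

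The only genuine obstacle in the whole argument is part~(d), and it has already been surmounted: it is exactly the content of the converse to Katri\v{n}\'{a}k's theorem (Theorem~\ref{Main}). Everything else reduces to a citation of Theorem~\ref{K_Theorem} or the introductory observations, or to the short symmetric computation of part~(c). I would therefore expect the final write-up to be brief, with the substantive work deferred entirely to the two preceding theorems.
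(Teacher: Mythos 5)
Your proposal is correct and follows essentially the same route as the paper: (a) from Katri\v{n}\'{a}k's theorem, (b) from the standard reduct observation, (c) by checking $\kappa(x,0)=x^*$ and $\kappa^d(x,1)=x^+$, and (d) from Theorem~\ref{Main}. The only difference is that you spell out the short substitution computation for (c) that the paper merely states as an ``observe,'' which is a harmless (indeed helpful) elaboration.
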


\begin{proof}
	(a) follows from Katrinak's theorem \ref{K_Theorem}, while (b) is well known.  \\
 (c): Let $\mathbf{A} := \langle A, \lor, \land, {^*}^{\mathbf{A}}, {^+}^{\mathbf{A}}, 0, 1 \rangle \in \mathbb{RDBLP}$. Let $\mathbf{A_1} :=\mathbf{A}^{dblh} := \langle A, \lor, \land, \kappa, \kappa^d, 0, 1 \rangle$.  Then $\mathbf{A}_1 \in \mathbb{RDBLH}$. by (a).  Now $\mathbf{A}_1^{dblp}:= \langle A, \lor, \land, {^*}^{\mathbf{A_1}}, {^+}^{\mathbf{A_1}}, 0, 1 \rangle$, where 
 $x{^*}^{\mathbf{A_1}}:= \kappa(x,0)$ and 
 ${x^+}^{\mathbf{A_1}} := \kappa^d (x,0)$.	  Observe that $\kappa(x,0)=x^*$ and  $\kappa^d (x,1)=x^+$.  Then it follows that
  ${^*}^{\mathbf{A_1}}=^*$ and ${^+}^{\mathbf{A_1}}= ^+$, implying $\mathbf{A_1}= \mathbf{A}$. \\
 (d): Let $\mathbf{A} := \langle A, \lor, \land, \to^{\mathbf{A}}, \leftarrow^{\mathbf{A}}, 0, 1 \rangle  \in \mathbb{RDBLH}$. Let $\mathbf{A_2} :=\mathbf{A}^{dblp} := \langle A, \lor, \land, {^*}, \kappa^d, 0, 1 \rangle$.  Then $\mathbf{A}_2 \in \mathbb{RDBLP}$. by (b).  Now $\mathbf{A}_2^{dblh}:= \langle A, \lor, \land, {\kappa}^{\mathbf{A_2}}, {\kappa^d}^{\mathbf{A_2}}, 0, 1 \rangle$, where 
 ${\kappa}^{\mathbf{A_2}}$ and ${\kappa^d}^{\mathbf{A_2}}$ are as defined earlier.
   Observe that $\kappa^{\mathbf A_2} = \  \to^{\mathbf A_2}$ and  $\kappa{^d}^{\mathbf{A_2}} = \  \leftarrow^{\mathbf{A_2}}$ by Theorem \ref{Main}.  Hence, it follows that
$\mathbf{A_2}= \mathbf{A}$. 	
\end{proof}

\begin{Corollary} \label{Cor_termeq1}
$\mathbb{RDBLP}$ and $\mathbb{RDPCH}$ 
are term-equivalent to each other.  More explicitly,
\begin{enumerate}
   \item[{\rm(a)}] For $\mathbf A \in \mathbb{RDBLP}$, let $\mathbf{A}^{dpch} := \langle A, \lor, \land, \to, ^+, 0, 1 \rangle$, where $\to$ is as defined in Theorem \ref{K_Theorem}. 
   Then $\mathbf{A}^{dpch} \in \mathbb{RDPCH}$.
   
   \item[{\rm(b)}] For $\mathbf A \in \mathbb{RDPCH}$, let $\mathbf{A}^{dblp} := \langle A, \lor, \land, ^*, ^+, 0, 1 \rangle$, where $^*$ is defined by: $x^* := x \to 0 $.   
   Then $\mathbf{A}^{dblp} \in \mathbb{RDBLP}$.

   \item[{\rm(c)}] If $\mathbf A \in \mathbb{RDBLP}$, then $\mathbf(A^{{dpch}})^{dblp}  = \mathbf A$.
   \item[{\rm(d)}]  If $\mathbf A \in \mathbb{RDPCH}$, then $\mathbb(A^{dblp})^{dpch} = \mathbf A$.
\end{enumerate}
\end{Corollary}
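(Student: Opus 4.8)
The plan is to follow exactly the template of the preceding Corollary~\ref{CorA}: exhibit the two assignments $\mathbf A \mapsto \mathbf A^{dpch}$ and $\mathbf A \mapsto \mathbf A^{dblp}$, verify that each lands in the claimed variety (parts (a) and (b)), and then check that they are mutually inverse (parts (c) and (d)). The situation here is in fact cleaner than in Corollary~\ref{CorA}, because the operation $^+$ belongs to both signatures and is left untouched by both assignments; hence the only operation that is actually removed and reconstructed is the pair $(^*, \to)$, and proving (c) and (d) amounts to tracking just this one operation through the round trip.

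For parts (a) and (b) I would appeal directly to the observations recorded in Section~\ref{SA}. Part (a) is observation (1): for $\mathbf A \in \mathbb{RDBLP}$, Katri\v{n}\'{a}k's theorem (Theorem~\ref{K_Theorem}) makes $\to := \kappa$ into a Heyting implication, and since $^+$ is already a dual pseudocomplementation, $\langle A, \lor, \land, \to, ^+, 0, 1\rangle$ is a regular dually pseudocomplemented Heyting algebra. Part (b) is observation (3): for $\mathbf A \in \mathbb{RDPCH}$ the identity $x^* := x \to 0$ yields a pseudocomplementation (as recalled for semi-Heyting algebras in Section~\ref{SA}), so $\langle A, \lor, \land, ^*, ^+, 0, 1\rangle$ is a regular double $p$-algebra. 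In both cases regularity transfers because (M) holds in $\mathbf A$ and, by the computation in part (c) below, the derived pseudocomplement agrees with the primitive one.

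For part (c), starting from $\mathbf A \in \mathbb{RDBLP}$, the structure $(\mathbf A^{dpch})^{dblp}$ recomputes the pseudocomplement as $x \mapsto \kappa(x,0)$ while carrying $^+$ through unchanged, so it suffices to show $\kappa(x,0) = x^*$. This is a one-line evaluation: since $0^* = 1$, $0^{**} = 0$, and $x^{***} = x^*$ by Lemma~\ref{lemma_properties_RDPCH}(\ref{140420_10}), one gets $\kappa(x,0) = (x^* \lor 0)^{**} \land [(x \lor x^*)^+ \lor x^* \lor 0 \lor 1] = x^{***} \land 1 = x^*$. Hence $(\mathbf A^{dpch})^{dblp}$ agrees with $\mathbf A$ on every operation, proving (c).

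For part (d), starting from $\mathbf A \in \mathbb{RDPCH}$, the reconstruction $(\mathbf A^{dblp})^{dpch}$ recomputes the implication as $\kappa$ built from $x^* := x \to 0$ and the unchanged $^+$. To conclude that this recovered $\kappa$ equals the original $\to$, I would invoke our main theorem, Theorem~\ref{Main}(a), which asserts precisely that $x \to y \approx \kappa(x,y)$ holds throughout $\mathbb{RDPCH}$; together with the fact that $^+$ is preserved, this gives $(\mathbf A^{dblp})^{dpch} = \mathbf A$. The main obstacle is therefore not located in this corollary at all but in Theorem~\ref{Main} itself; granting that theorem, part (d) is immediate, and the remaining parts are routine, with part (c) requiring only the short evaluation $\kappa(x,0) = x^*$.
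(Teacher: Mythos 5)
Your proposal is correct and follows essentially the same route as the paper: parts (a) and (b) are quoted from Katri\v{n}\'{a}k's theorem and the standard Heyting-to-$p$-algebra reduction, part (c) rests on the one-line evaluation $\kappa(x,0)=x^{***}\wedge 1=x^{*}$, and part (d) is exactly the appeal to Theorem \ref{Main}(a); this is precisely the argument the paper gives for Corollary \ref{CorA}(c),(d) and then declares ``similar'' here. No discrepancies worth noting.
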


\begin{proof}
(a) follows from Katrinak's theorem \ref{K_Theorem}, while (b) is well-known.  The verification of (c) and (d), being similar to that of (c) and (d) of Corollary \ref{CorA}, is left to the reader. 
\end{proof}

Similarly, the following corollary is also proved.
\begin{Corollary} \label{Cor_termeq1}
$\mathbb{RDBLP}$ and $\mathbb{RPCH}^d$ 
are term-equivalent to each other. 
\end{Corollary}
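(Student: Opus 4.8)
The plan is to establish an explicit pair of mutually inverse term maps between $\mathbb{RDBLP}$ and $\mathbb{RPCH}^d$, in direct analogy with Corollary \ref{CorA} and the preceding corollary, but with the roles of the Heyting implication $\to$ and the term $\kappa$ replaced throughout by the dual Heyting implication $\leftarrow$ and the dual Katri\v{n}\'{a}k term $\kappa^d$, and with the pseudocomplement $^*$ now playing the role that $^+$ played before. Concretely, I would state and prove the four clauses (a)--(d) exactly parallel to those corollaries: (a) for $\mathbf A = \langle A, \lor, \land, ^*, ^+, 0, 1 \rangle \in \mathbb{RDBLP}$ set $\mathbf A^{pch^d} := \langle A, \lor, \land, ^*, \leftarrow, 0, 1 \rangle$ with $x \leftarrow y := \kappa^d(x,y)$; (b) for $\mathbf A \in \mathbb{RPCH}^d$ set $\mathbf A^{dblp} := \langle A, \lor, \land, ^*, ^+, 0, 1 \rangle$ with $x^+ := x \leftarrow 1$; and (c), (d) the two roundtrip identities.

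For clause (a), Katri\v{n}\'{a}k's Theorem \ref{K_Theorem} guarantees that the reduct $\langle A, \lor, \land, \leftarrow, 0, 1 \rangle$ obtained from $\kappa^d$ is a (regular) dual Heyting algebra, so adjoining the pseudocomplement $^*$ already carried by $\mathbf A$ yields a regular pseudocomplemented dual Heyting algebra, with (M) transferring verbatim; hence $\mathbf A^{pch^d} \in \mathbb{RPCH}^d$. For clause (b), observation (6) of the introduction shows directly that $\langle A, \lor, \land, ^*, ^+, 0, 1 \rangle$ with $x^+ := x \leftarrow 1$ is a regular double $p$-algebra, so $\mathbf A^{dblp} \in \mathbb{RDBLP}$. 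The roundtrips are then where a short verification is needed: for (c), composing the two constructions on $\mathbf A \in \mathbb{RDBLP}$ produces the dual pseudocomplement $x^+ = x \leftarrow 1 = \kappa^d(x,1)$, so I must confirm the identity $\kappa^d(x,1) \approx x^+$, which is the exact order dual of the identity $\kappa(x,0) \approx x^*$ already used in the proof of Corollary \ref{CorA}(c); for (d), composing them on $\mathbf A \in \mathbb{RPCH}^d$ reintroduces $\leftarrow$ as $\kappa^d$, and Theorem \ref{Main}(b) supplies precisely $\kappa^d(x,y) \approx x \leftarrow y$, so the original dual implication is recovered. Term-equivalence, and consequently the isomorphism of the subvariety lattices, follows at once.

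The only genuine computation, and thus the main (minor) obstacle, is checking $\kappa^d(x,1) \approx x^+$. Substituting $y = 1$ into $\kappa^d(x,y) = (x^{+} \land y^{++})^{++} \lor [(x \lor x^{+})^* \land x^{+} \land y \land y^{+}]$ and using $1^{+} = 0$, $1^{++} = 1$ collapses the bracketed conjunct to $0$ and reduces the first term to $(x^{+})^{++} = x^{+++} = x^{+}$, the last equality being the order dual of Lemma \ref{lemma_properties_RDPCH}(\ref{140420_10}). Since every remaining step is the order dual of an argument already given for $\mathbb{RDPCH}$, I would simply remark that the proof is dual to that of Corollary \ref{CorA} together with the preceding corollary, and leave the routine dualizations to the reader.
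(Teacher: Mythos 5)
Your proposal is correct and matches the paper's intent: the paper disposes of this corollary with a one-line remark that it is proved "similarly" to the term-equivalence of $\mathbb{RDBLP}$ and $\mathbb{RDPCH}$, i.e., by dualizing the argument of Corollary \ref{CorA}, which is exactly what you carry out. Your explicit check that $\kappa^d(x,1) \approx x^+$ and your appeal to Theorem \ref{Main}(b) for the roundtrip are precisely the details the paper leaves to the reader.
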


The following corollary is immediate from the preceding corollaries.
\begin{Corollary}
The varieties $\mathbb{RDBLP}$, $\mathbb{RDPCH}$,   $\mathbb{RPCH}^d$, and $\mathbb{RDBLH}$,                
are term-equivalent to one  another.
\end{Corollary}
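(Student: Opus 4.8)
The plan is to use the fact that term-equivalence of varieties is an \emph{equivalence relation}---in particular it is reflexive, symmetric, and transitive---so that it suffices to exhibit each of the four varieties as term-equivalent to a single common pivot. First I would observe that the three immediately preceding corollaries accomplish exactly this, with $\mathbb{RDBLP}$ playing the role of the pivot: Corollary \ref{CorA} asserts that $\mathbb{RDBLP}$ and $\mathbb{RDBLH}$ are term-equivalent, and the two subsequent corollaries assert that $\mathbb{RDBLP}$ is term-equivalent to $\mathbb{RDPCH}$ and to $\mathbb{RPCH}^d$, respectively. Thus each of the three varieties $\mathbb{RDBLH}$, $\mathbb{RDPCH}$, $\mathbb{RPCH}^d$ is already linked by a term-equivalence to $\mathbb{RDBLP}$.

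The conclusion then follows formally. By symmetry each of these three varieties is term-equivalent to $\mathbb{RDBLP}$, and by transitivity any two of them are term-equivalent to one another \emph{through} $\mathbb{RDBLP}$. Counting the three pivot equivalences themselves together with the derived ones accounts for all $\binom{4}{2}=6$ unordered pairs among the four varieties, which is precisely the assertion to be proved.

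I do not expect any genuine obstacle here---this is why the corollary is flagged as immediate. The only conceivable bookkeeping, should one prefer to write down the term-equivalences explicitly rather than invoke transitivity abstractly, is to compose the defining term-translations recorded in the pivot corollaries (the Katri\v{n}\'{a}k terms $\kappa$ and $\kappa^d$ together with the translations $x^{\ast} := x \to 0$ and $x^+ := x \leftarrow 1$) and to confirm that the resulting round-trip maps reduce to the identity. But these checks are already subsumed by the round-trip identities established as parts (c) and (d) of the pivot corollaries, so no new computation is required.
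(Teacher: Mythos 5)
Your proposal is correct and matches the paper's argument: the paper simply declares the corollary immediate from the three preceding corollaries, which establish the term-equivalence of $\mathbb{RDBLP}$ with each of $\mathbb{RDBLH}$, $\mathbb{RDPCH}$, and $\mathbb{RPCH}^d$, and the conclusion follows by symmetry and transitivity exactly as you describe.
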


The following corollary is immediate from the preceding corollary.  Let $\mathcal{L}_{\rm V}(\mathbb{V})$ denote the lattice of subvarieties of the variety $\mathbb{V}$ of algebras. 

\begin{Corollary} \label{C2.7}
$\mathcal{L}_{\rm V}(\mathbb{RDBLP}) \cong \mathcal{L}_{\rm V}(\mathbb{RDPCH}) \cong \mathcal{L}_{\rm V}(\mathbb{RPCH}^d) \cong \mathcal{L}_{\rm V}(\mathbb{RDBLH})$. 
\end{Corollary}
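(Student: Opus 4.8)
The plan is to derive this entirely formally from the term-equivalences just established, via the standard universal-algebraic fact that term-equivalent varieties have isomorphic lattices of subvarieties. Since $\cong$ is transitive and the preceding corollary asserts that the four varieties are mutually term-equivalent, it suffices to prove this general fact for one pair and then transport it across the chain of equivalences anchored at $\mathbb{RDBLP}$.

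So I would first isolate the general statement: if varieties $\mathbb{V}$ and $\mathbb{W}$ are term-equivalent, then $\mathcal{L}_{\rm V}(\mathbb{V}) \cong \mathcal{L}_{\rm V}(\mathbb{W})$. By definition of term-equivalence, each basic operation of $\mathbb{W}$ is given on $\mathbb{V}$-algebras by a $\mathbb{V}$-term, each basic operation of $\mathbb{V}$ by a $\mathbb{W}$-term, and the two families of translating terms are mutually inverse. This data defines a bijection $\Phi$ from the class of $\mathbb{V}$-algebras to the class of $\mathbb{W}$-algebras which is the identity on underlying sets, reinterpreting only the operations. The key observation is that $\Phi$ and $\Phi^{-1}$ preserve subalgebras, homomorphisms, and direct products, since each of these notions depends only on the carrier together with the (interdefinable) operations; hence $\Phi$ commutes with the class operators $\mathrm{H}$, $\mathrm{S}$ and $\mathrm{P}$. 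By Birkhoff's HSP theorem a subclass of a variety is a subvariety exactly when it is closed under $\mathrm{H}$, $\mathrm{S}$ and $\mathrm{P}$, so $\Phi$ restricts to an inclusion-preserving bijection between subvarieties whose inverse $\Phi^{-1}$ is likewise inclusion-preserving; thus $\Phi$ is a lattice isomorphism $\mathcal{L}_{\rm V}(\mathbb{V}) \cong \mathcal{L}_{\rm V}(\mathbb{W})$.

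With this lemma in hand, I would apply it to the term-equivalences of $\mathbb{RDBLP}$ with each of $\mathbb{RDBLH}$, $\mathbb{RDPCH}$ and $\mathbb{RPCH}^d$ and compose the resulting isomorphisms to obtain the displayed chain. I do not expect a genuine obstacle: all the substantive work lies in the explicit term translations and the round-trip identities $(\mathbf{A}^{dblh})^{dblp} = \mathbf{A}$ and $(\mathbf{A}^{dblp})^{dblh} = \mathbf{A}$ already recorded in Corollary \ref{CorA}, which is exactly what witnesses that the translating terms are mutually inverse. The only point deserving a word of care is that the shared bounds $0,1$ (and the common lattice operations $\vee,\wedge$) are fixed by every translation, so no clash among the four signatures can occur; granting this, the passage from term-equivalence to isomorphism of subvariety lattices is folklore and the corollary is indeed immediate.
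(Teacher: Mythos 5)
Your proposal is correct and follows essentially the same route as the paper, which simply declares the corollary immediate from the mutual term-equivalence of the four varieties; you have merely made explicit the folklore lemma (term-equivalence induces a bijection on algebras commuting with $\mathrm{H}$, $\mathrm{S}$, $\mathrm{P}$, hence an isomorphism of subvariety lattices) that the paper leaves unstated.
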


\medskip
\subsection{Regular De Morgan p-algebras, Regular De Morgan Heyting algebras, Regular De Morgan double Heyting algebras and  Regular De Morgan double $p$-algebras}  \label{section_RDMP}

\

\medskip
We now give more consequences of Theorem \ref{K_Theorem} and Theorem \ref{Main}. 
The proofs of the following corollaries are similar to those of the previous corollaries.

\begin{Corollary} \label{RDMP_Theorem}
$\mathbb{RDMP}$ and $\mathbb{RDMH}$ 
are term-equivalent to each other.  More explicitly,
\begin{enumerate}
   \item[{\rm(a)}] For $\mathbf A = \langle A, \lor, \land, ^*, ', 0, 1 \rangle \in \mathbb{RDMP}$, let $\mathbf{A}^{dmh} := \langle A, \lor, \land, \to, ', 0, 1 \rangle$, where $\to$ is defined by:
   $x \to y := (x^{\ast} \lor y^{\ast \ast}){^{\ast \ast}} \land [(x \lor x^{\ast})'{^*}'  \lor x^{\ast} \lor y \lor y^{\ast}].$
   Then $\mathbf{A}^{dmh} \in \mathbb{RDMH}$.
   
   \item[{\rm(b)}] For $\mathbf A = \langle A, \lor, \land, \to, ', 0, 1 \rangle \in \mathbb{RDMH}$, let $\mathbf{A}^{dmp} := \langle A, \lor, \land, ^*, ', 0, 1 \rangle$, where $^*$ is defined by $x^* := x \to 0 $.     
   Then $\mathbf{A}^{dmp} \in \mathbb{RDMP}$.

   \item[{\rm(c)}] If $\mathbf A \in \mathbb{RDMP}$, then $(\mathbf{A}^{dmh})^{dmp}  = \mathbf A$.
   \item[{\rm(d)}]  If $\mathbf A \in \mathbb{RDMH}$, then $(\mathbf{A}^{dmp})^{dmh}  = \mathbf A$.
\end{enumerate}
\end{Corollary}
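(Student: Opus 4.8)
The plan is to mimic the proofs of Corollaries \ref{CorA} and \ref{Cor_termeq1}, using as a bridge Romanowska's observation (and its Heyting analogue noted in the text) that, for $\mathbf A \in \mathbb{RDMP}$, setting $x^+ := x'{^*}'$ turns the reduct $\langle A, \lor, \land, ^*, ^+, 0, 1\rangle$ into a double $p$-algebra. The crucial point is that under this definition of $^+$ the regularity identity (M1) is literally identity (M), so a regular De Morgan $p$-algebra yields a \emph{regular} double $p$-algebra, and conversely. This lets me transport Theorem \ref{K_Theorem} and Theorem \ref{Main} into the De Morgan setting while leaving the involution $'$ (and the underlying bounded distributive lattice) untouched throughout.

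For (a): given $\mathbf A \in \mathbb{RDMP}$, form $x^+ := x'{^*}'$ so that $\langle A, \lor, \land, ^*, ^+, 0, 1\rangle \in \mathbb{RDBLP}$. By Theorem \ref{K_Theorem}, $\langle A, \lor, \land, \kappa, \kappa^d, 0, 1\rangle$ is a regular double Heyting algebra; in particular $\langle A, \lor, \land, \kappa, 0, 1\rangle$ is a Heyting algebra. Substituting $x^+ = x'{^*}'$ into $\kappa$ gives exactly the operation $\to$ of the statement, so $\langle A, \lor, \land, \to, 0, 1\rangle$ is a Heyting algebra; since $'$ is unchanged, $\langle A, \lor, \land, ', 0, 1\rangle$ is still a De Morgan algebra, whence $\mathbf A^{dmh} \in \mathbb{DMH}$. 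Regularity of $\mathbf A^{dmh}$ follows once I note that the pseudocomplement it induces coincides with the original one: a short computation gives $\kappa(x, 0) = (x^*)^{**} \land 1 = x^{***} = x^*$ (using $0^* = 1$, $0^{**} = 0$, and Lemma \ref{lemma_properties_RDPCH}(\ref{140420_10})), so that (M1) for $\mathbf A^{dmh}$ is the very identity (M1) satisfied by $\mathbf A$.

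For (b): given $\mathbf A \in \mathbb{RDMH}$, the standard fact that a Heyting algebra yields a distributive $p$-algebra under $x^* := x \to 0$ shows $\langle A, \lor, \land, ^*, 0, 1\rangle$ is a $p$-algebra; with $'$ unchanged this makes $\mathbf A^{dmp} \in \mathbb{DMP}$, and regularity (M1), being phrased in terms of $^*$ ($= \to 0$) and $'$, transfers verbatim. Statement (c) is then immediate: applying the identity $\kappa(x, 0) = x^*$ from (a), the pseudocomplement recovered in $(\mathbf A^{dmh})^{dmp}$ equals the original $^*$, and all other operations are unchanged, so $(\mathbf A^{dmh})^{dmp} = \mathbf A$.

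The one step requiring genuine care is (d), and this is where Theorem \ref{Main} is essential. Starting from $\mathbf A \in \mathbb{RDMH}$ and forming $\mathbf A^{dmp}$ (with $x^* = x \to 0$), I must show that rebuilding the implication via $\kappa$, with $x^+ = x'{^*}'$, returns the original $\to$. To do this I first verify that $\langle A, \lor, \land, \to, ^+, 0, 1\rangle$, where $x^+ := x'{^*}'$, lies in $\mathbb{RDPCH}$: its $\to$-reduct is Heyting by hypothesis, its $^+$-reduct is a dual $p$-algebra by the observation above, and regularity holds because (M1) for $\mathbf A$ is exactly (M) for this reduct. Theorem \ref{Main}(a) then yields $x \to y \approx \kappa(x, y)$, so the reconstructed implication equals the original and $(\mathbf A^{dmp})^{dmh} = \mathbf A$. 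The main obstacle is thus not computational but structural: ensuring that the reduct with $x^+ = x'{^*}'$ genuinely sits in $\mathbb{RDPCH}$, so that Theorem \ref{Main} applies.
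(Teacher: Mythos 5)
Your proposal is correct and follows essentially the same route as the paper, which simply states that the proof is ``similar to those of the previous corollaries'' (i.e.\ Corollaries \ref{CorA} and \ref{Cor_termeq1}): Katri\v{n}\'{a}k's Theorem \ref{K_Theorem} applied to the regular double $p$-algebra obtained via Romanowska's $x^+:=x'{^*}'$ for (a), the standard Heyting-to-$p$-algebra passage for (b), the computation $\kappa(x,0)=x^*$ for (c), and Theorem \ref{Main}(a) applied to the $\mathbb{RDPCH}$-reduct for (d). You have in fact supplied more detail than the paper does, and the one point you flag as delicate --- that the reduct with $x^+=x'{^*}'$ is genuinely regular because (M1) coincides with (M) under that substitution --- is exactly the bridge the authors leave implicit.
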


\begin{Corollary} 
$\mathbb{RDMP}$ and $\mathbb{RDMDBLH}$ are term-equivalent to each other.  More explicitly,
\begin{enumerate}
   \item[{\rm(a)}] For $\mathbf A \in \mathbb{RDMP}$, let $\mathbf{A}^{dmdblh} := \langle A, \lor, \land, \to, \leftarrow, ', 0, 1 \rangle$, where $\to$ and $\leftarrow$ are defined earlier. 
   Then $\mathbf{A}^{dmdblh} \in \mathbb{RDMDBLH}$.
   
   \item[{\rm(b)}]  For $\mathbf A \in \mathbb{RDMDBLH}$, let $\mathbf{A}^{dmp} := \langle A, \lor, \land, \to, ' , 0, 1 \rangle$, 
   Then $\mathbf{A}^{dmp} \in \mathbb{RDMP}$.

   \item[{\rm(c)}] If $\mathbf A \in \mathbb{RDMP}$, then $\mathbb(A^{{dmdblh}})^{dmp}  = \mathbf A$.
   \item[{\rm(d)}]  If $\mathbf A \in \mathbb{RDMDBLH}$, then $\mathbb(A^{dmp})^{dmdblh}  = \mathbf A$.
\end{enumerate}
\end{Corollary}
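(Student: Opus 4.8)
The plan is to recycle the machinery of Corollaries~\ref{CorA} and \ref{Cor_termeq1}: convert $\mathbb{RDMP}$ into $\mathbb{RDBLP}$ via Romanowska's dual pseudocomplement, apply Katrinak's theorem (Theorem~\ref{K_Theorem}) to produce the double Heyting operations, and carry the De Morgan operation $'$ along untouched; the return trip is powered by Theorem~\ref{Main}. The single non-routine ingredient, which I will isolate at the outset, is the coincidence $x'{^*}' = x \leftarrow 1$ valid in any of these combined structures. This holds because in a bounded distributive lattice the dual pseudocomplement of $a$ (the least $b$ with $a \vee b = 1$) is unique when it exists, and both $a'{^*}'$ (from the De Morgan $p$-algebra reduct, by Romanowska's observation) and $a \leftarrow 1$ (from the dual Heyting structure) are dual pseudocomplements of $a$.

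For part (a) I would take $\mathbf A = \langle A, \vee, \wedge, {^*}, {'}, 0, 1\rangle \in \mathbb{RDMP}$ and set $a^+ := a'{^*}'$. Romanowska's observation makes $\langle A, \vee, \wedge, {^*}, {^+}, 0, 1\rangle$ a double $p$-algebra, and the regularity axiom (M1) is exactly (M) once $x^+$ is read as $x'{^*}'$; hence this reduct lies in $\mathbb{RDBLP}$. Katrinak's theorem then supplies $\to := \kappa(x,y)$ and $\leftarrow := \kappa^d(x,y)$ making $\langle A, \vee, \wedge, \to, \leftarrow, 0, 1\rangle$ a (regular) double Heyting algebra; since $(x \vee x^*)^+ = (x \vee x^*)'{^*}'$, these are precisely the terms ``defined earlier''. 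Adjoining the unchanged De Morgan reduct gives $\mathbf{A}^{dmdblh} \in \mathbb{DMDBLH}$, and its regularity (M1) transfers directly from $\mathbf A$ because (M1) involves only $\vee, \wedge, {^*}, {'}$, and here $x \to 0 = \kappa(x,0) = x^*$ (see (c) below). For part (b), given $\mathbf A \in \mathbb{RDMDBLH}$ I would put $x^* := x \to 0$; the Heyting reduct makes $\langle A, \vee, \wedge, {^*}, 0, 1\rangle$ a distributive $p$-algebra, so together with the given De Morgan reduct, $\mathbf{A}^{dmp}$ is a De Morgan $p$-algebra. Regularity is immediate, since the assumed identity (M1) is phrased solely in $\vee, \wedge, {^*}, {'}$, which are shared by $\mathbf A$ and $\mathbf{A}^{dmp}$; thus $\mathbf{A}^{dmp} \in \mathbb{RDMP}$.

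For the round-trips (c) and (d) the computations mirror Corollary~\ref{CorA}(c),(d). For (c) I would verify $\kappa(x,0) = x^*$ using $0^{**}=0$, $x^{***}=x^*$, and $0^*=1$ (the latter collapsing the second conjunct of $\kappa(x,0)$ to $1$); since $'$ is untouched, $(\mathbf{A}^{dmdblh})^{dmp} = \mathbf A$. For (d) the coincidence noted above does the work: inside $\mathbf{A}^{dmp}$ the dual pseudocomplement fed into Katrinak's terms is $x'{^*}'$, which equals the ambient $x \leftarrow 1$, so the $\kappa$ and $\kappa^d$ computed in $\mathbf{A}^{dmp}$ agree with those of the original double Heyting structure; Theorem~\ref{Main}(c) then gives $\kappa = {\to}$ and $\kappa^d = {\leftarrow}$, whence $(\mathbf{A}^{dmp})^{dmdblh} = \mathbf A$.

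The hard part, such as it is, will be justifying the identity $x'{^*}' = x \leftarrow 1$; everything else is bookkeeping carried over verbatim from the non--De Morgan corollaries. Once the uniqueness of the dual pseudocomplement is invoked, the two candidate dual pseudocomplements merge, (M1) and (M) become the same identity across all four structures, and the term-equivalence follows formally from Theorem~\ref{K_Theorem} and Theorem~\ref{Main}.
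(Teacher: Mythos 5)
Your proposal is correct and follows essentially the same route the paper intends (the paper only remarks that the proof is ``similar to those of the previous corollaries,'' i.e.\ Katri\v{n}\'{a}k's Theorem \ref{K_Theorem} for one direction, the well-known reduct facts for the other, and $\kappa(x,0)=x^*$ together with Theorem \ref{Main} for the round trips). The one detail you rightly isolate and justify --- that $x'^{*}{}'$ and $x\leftarrow 1$ coincide by uniqueness of the dual pseudocomplement, so that (M) and (M1) agree and Theorem \ref{Main} applies to the double-Heyting reduct --- is exactly the bookkeeping the paper leaves implicit.
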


\begin{Corollary} 
$\mathbb{RDMP}$ and $\mathbb{RDMDBLP}$ are term-equivalent to each other. 
\end{Corollary}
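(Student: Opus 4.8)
The plan is to exhibit explicit term operations witnessing the equivalence, in exactly the style of Corollary \ref{RDMP_Theorem}, and then to verify the two round trips. Since $\mathbb{RDMDBLP}$ has signature $\langle A, \lor, \land, ^*, ^+, ', 0, 1\rangle$ while $\mathbb{RDMP}$ has the smaller signature $\langle A, \lor, \land, ^*, ', 0, 1\rangle$, the only operation that needs to be accounted for is the dual pseudocomplement $^+$; the De Morgan negation $'$ and the pseudocomplement $^*$ are already present in both.

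For the forward direction I would take $\mathbf A = \langle A, \lor, \land, ^*, ', 0, 1\rangle \in \mathbb{RDMP}$ and define $\mathbf{A}^{dmdblp} := \langle A, \lor, \land, ^*, ^+, ', 0, 1\rangle$ with $x^+ := x'{^*}'$; for the backward direction I would send $\mathbf A \in \mathbb{RDMDBLP}$ to its $\langle A, \lor, \land, ^*, ', 0, 1\rangle$-reduct $\mathbf{A}^{dmp}$. The heart of the forward direction is Romanowska's observation recalled in Section \ref{SA}: in a De Morgan $p$-algebra, $x'{^*}'$ is precisely the dual pseudocomplement of $x$, so $\langle A, \lor, \land, ^*, ^+, 0, 1\rangle$ is a double $p$-algebra, and adjoining the De Morgan reduct yields a De Morgan double $p$-algebra. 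Regularity is immediate, because the defining identity (M1) of $\mathbb{RDMP}$ is literally (M) once $x^+$ is rewritten as $x'{^*}'$; hence $\mathbf{A}^{dmdblp} \in \mathbb{RDMDBLP}$. The backward direction is even easier: the reduct of a regular De Morgan double $p$-algebra is a De Morgan $p$-algebra whose regularity is again (M1), since the $^+$ of the double $p$-algebra equals $x'{^*}'$.

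For the round trips, the composite $\mathbb{RDMP} \to \mathbb{RDMDBLP} \to \mathbb{RDMP}$ is the identity on the nose, as one adjoins $^+$ and then discards it. The composite $\mathbb{RDMDBLP} \to \mathbb{RDMP} \to \mathbb{RDMDBLP}$ returns the operation $x \mapsto x'{^*}'$ in place of the original $^+$, and these coincide because in a double $p$-algebra the dual pseudocomplement is uniquely determined by the lattice order (it is the least $y$ with $x \lor y = 1$), while the De Morgan computation $x' \land z = 0 \iff x \lor z' = 1$ shows that this least element is exactly $x'{^*}'$. Thus both composites are the identity, establishing the term-equivalence.

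The step I expect to require the most care is verifying that the genuine dual pseudocomplement $^+$ of a member of $\mathbb{RDMDBLP}$ agrees with the defined term $x'{^*}'$; this, however, is precisely the content of Romanowska's observation together with the uniqueness of the dual pseudocomplement, so it presents no real obstacle. Everything else is routine signature-bookkeeping parallel to the proof of Corollary \ref{RDMP_Theorem}, which is why the paper can simply assert that it is proved in the same manner.
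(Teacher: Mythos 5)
Your proof is correct and is essentially the argument the paper intends (the paper omits it, noting only that it is ``similar to the previous corollaries''): adjoin $x^+ := x'{^*}'$ via Romanowska's observation, take the reduct in the other direction, and use the uniqueness of the dual pseudocomplement to see that the second round trip recovers the original $^+$. You also correctly note the simplification that regularity for both varieties is the \emph{same} identity (M1), so nothing needs to be translated there, and that—unlike the Heyting-flavoured corollaries—neither Katri\v{n}\'{a}k's theorem nor Theorem \ref{Main} is needed for this particular equivalence.
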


\begin{Corollary} \label{Cor3A}
$\mathcal{L}_{\rm V}(\mathbb{RDMP}) \cong \mathcal{L}_{\rm V}(\mathbb{RDMH}) \cong \mathcal{L}_{\rm V}(\mathbb{RDMDBLH}) \\
\cong \mathcal{L}_{\rm V}(\mathbb{RDMDBLP})$.
\end{Corollary}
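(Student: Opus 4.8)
The plan is to deduce this purely formally from the term-equivalences just established, exactly as Corollary \ref{C2.7} was obtained in the first half of the section. The three preceding corollaries show that $\mathbb{RDMP}$ is term-equivalent to each of $\mathbb{RDMH}$, $\mathbb{RDMDBLH}$, and $\mathbb{RDMDBLP}$; since term-equivalence is an equivalence relation on varieties, the four varieties are pairwise term-equivalent. It therefore suffices to invoke the standard universal-algebraic fact that term-equivalent varieties have isomorphic lattices of subvarieties, and this yields the displayed chain of isomorphisms at once.

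To make the isomorphism explicit, I would read off the translation assignments supplied by the term-equivalence corollaries. For instance, between $\mathbb{RDMP}$ and $\mathbb{RDMH}$ the maps $\mathbf A \mapsto \mathbf{A}^{dmh}$ and $\mathbf B \mapsto \mathbf{B}^{dmp}$ of Corollary \ref{RDMP_Theorem} are mutually inverse bijections that preserve underlying sets and are given by terms in both directions. Because subalgebras, homomorphic images, and direct products are defined solely through the fundamental operations ---which are interdefinable across the equivalence--- these assignments commute with the class operators $H$, $S$, and $P$. Consequently $\mathbf A \mapsto \mathbf{A}^{dmh}$ sends subvarieties of $\mathbb{RDMP}$ to subvarieties of $\mathbb{RDMH}$, its inverse does the reverse, and both are clearly inclusion-preserving; hence they constitute a lattice isomorphism $\mathcal{L}_{\rm V}(\mathbb{RDMP}) \cong \mathcal{L}_{\rm V}(\mathbb{RDMH})$. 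Applying the same reasoning to the other two term-equivalences completes the chain.

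I expect there to be essentially no hard step here. The only thing that genuinely needs checking is that the two translations compose to the identity in each direction, so that the back-and-forth passage is an honest bijection of algebras; but this is precisely the content of parts (c) and (d) of the explicit term-equivalence corollaries. Thus the main potential obstacle ---verifying that the translation functors are mutually inverse--- has already been discharged when those corollaries were proved, and the present corollary is immediate.
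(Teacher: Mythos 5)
Your proposal is correct and follows exactly the paper's (implicit) argument: the paper treats this corollary as immediate from the preceding term-equivalence corollaries, via the standard fact that term-equivalent varieties have isomorphic subvariety lattices, just as Corollary \ref{C2.7} was obtained. Your extra elaboration of why the translations induce a lattice isomorphism is a harmless expansion of the same route.
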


 Using Corollary \ref{Cor3A} and the results from \cite{AdSaVc19}  and \cite{ AdSaVc20} we can conclude the following.

\begin{Corollary} \label{C1}
$|\mathcal{L}_{\rm V}(\mathbb{RDMP})| = |\mathcal{L}_{\rm V}(\mathbb{RDMH})| = |\mathcal{L}_{\rm V}(\mathbb{RDMDBLH})| $ \\
$= |\mathcal{L}_{\rm V}(\mathbb{RDMDBLP})| = 2^{\aleph_0}$.
\end{Corollary}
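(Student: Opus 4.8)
The plan is to separate the problem into two parts: first showing that the four cardinals coincide, and then pinning down their common value. The coincidence is immediate, since isomorphic lattices have equal cardinality and Corollary \ref{Cor3A} already gives $\mathcal{L}_{\rm V}(\mathbb{RDMP}) \cong \mathcal{L}_{\rm V}(\mathbb{RDMH}) \cong \mathcal{L}_{\rm V}(\mathbb{RDMDBLH}) \cong \mathcal{L}_{\rm V}(\mathbb{RDMDBLP})$. Thus it suffices to evaluate $|\mathcal{L}_{\rm V}(\mathbb{V})|$ for a single one of the four varieties $\mathbb{V}$, and I would choose whichever one is treated directly in the cited papers.

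For the upper bound I would argue purely formally. Each of these varieties lives in a finite similarity type, so the collection of terms, and with it the collection of identities, is countable. Since every subvariety of a variety is the class of all models satisfying some set of identities, and distinct subvarieties have distinct equational theories, the assignment of a subvariety to its equational theory embeds $\mathcal{L}_{\rm V}(\mathbb{V})$ into the power set of a countable set. Hence $|\mathcal{L}_{\rm V}(\mathbb{V})| \leq 2^{\aleph_0}$ for each of the four.

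For the matching lower bound I would invoke the results of \cite{AdSaVc19} and \cite{AdSaVc20}, in which a family of $2^{\aleph_0}$ pairwise distinct subvarieties is constructed inside one of the relevant De Morgan varieties (typically by producing a continuum-sized antichain of finite subdirectly irreducibles generating pairwise distinct varieties). This gives $|\mathcal{L}_{\rm V}(\mathbb{V})| \geq 2^{\aleph_0}$, and together with the upper bound yields exactly $2^{\aleph_0}$; transporting this value along the isomorphisms of Corollary \ref{Cor3A} then establishes the claim for all four varieties. The one point demanding attention --- and the only genuinely non-formal ingredient --- is to verify that the variety for which the continuum-many subvarieties are exhibited in \cite{AdSaVc19, AdSaVc20} is term-equivalent to one of our four; this is exactly what the term-equivalences underlying Corollary \ref{Cor3A} supply, so the lower bound transfers with no further work.
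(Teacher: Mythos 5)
Your proposal is correct and follows essentially the same route as the paper, which gives no explicit proof but simply derives the statement from the lattice isomorphisms of Corollary \ref{Cor3A} together with the cardinality results of \cite{AdSaVc19} and \cite{AdSaVc20}; your added remarks on the countable-language upper bound and on transporting the value along the isomorphisms just make explicit what the paper leaves implicit. One small simplification: \cite{AdSaVc20} treats regular pseudocomplemented De Morgan algebras, i.e.\ $\mathbb{RDMP}$ itself, so the lower bound applies directly to one of the four varieties without any further term-equivalence check.
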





\vspace{.5cm}
\section{Logically speaking} 

In this section our goal is to introduce new logics using the results proved in \cite{CoSa22} and also show that these logics are algebraizable having the varieties considered above as their algebraic semantics.  
%
%
To achieve this goal, we will first recall some preliminaries of the Abstract Algebraic Logic \cite{BlPi89, Fo16} and also certain definitions and results from \cite{Sa12, CoSa22}. 

An algebra $\mathbf A = \langle A, \lor,  \wedge,  \to, ', 0,1\rangle$ is a  
dually hemimorphic semi-Heyting algebra  (\cite{Sa12})  if  $\mathbf A$ satisfies the following conditions:

(a)  $\langle A,\lor, \wedge,  \to, 0,1\rangle $ is a semi-Heyting algebra (defined in Section 1),

(b) $0' \approx 1$,  

(c) $1' \approx 0$, 

(d) $(x \land y)' \approx x' \lor y'$  {\rm ($\land$-De Morgan law)}. 

The variety of dually hemimorphic semi-Heyting algebras will be denoted by $\mathbb{DHMSH}$.



\medskip
We now present the basic definitions and results of Abstract Algebraic Logic that will be useful
 later in this section. \\ 


{\bf Languages, Formulas and Logics}

\medskip

\indent A language $\mathbf L$ is a set of  finitary operations (or connectives), each with a fixed arity $n \geq 0$.   In this paper, we identify $\bot$ and $\top$ with $0$ and $1$ respectively and thus consider the languages $\langle \lor, \land, \to, \sim, 
 \bot, \top\rangle$ and $\langle \lor, \land, \to, ', 0, 1 \rangle$ as the same.  
For a countably infinite set $\mbox{\it Var}$ of propositional variables, the {\it formulas} of the language $\mathbf L$ are inductively defined as usual.  The set of formulas in the language {\bf L} will be denoted by $\mbox{\it Fm}_{\,\bf L}$

The set of formulas $\mbox{\it Fm}_{\,\bf L}$  can be turned into an algebra of formulas, denoted by ${\bf Fm}_{\bf L}$, in the usual way. Throughout the paper, $\Gamma$ denotes a set of formulas and lower case Greek letters denote formulas.  The homomorphisms from the formula algebra ${\bf Fm}_{\bf L}$ into an $\mathbf L$-algebra (i.e, an algebra of type $\mathbf L$) $\mathbf A$ are called {\em interpretations} (or {\em valuations}) in A. The set of all such interpretations is denoted by  $Hom({\bf Fm}_{\bf L},\mathbf A)$. If $h \in Hom({\bf Fm}_{\bf L},\mathbf A)$ then the {\em interpretation of a formula} $\alpha$ under $h$ is its image $h \alpha \in A$, while  $h\Gamma$ denotes the set $\{h\phi \ | \ \phi \in \Gamma\}$.  \\

{\bf Consequence Relations}: 

A {\it consequence relation} on $Fm_{\mathbf L}$ is a binary relation $\vdash$ between sets of
formulas and formulas that satisfies the following conditions for all $\Gamma$, $\Delta  \subseteq Fm_{\mathbf L}$ and $\phi \in Fm_{\bf L}$ :

(i) $\phi \in \Gamma$ implies $\Gamma \vdash \phi,$

(ii) $ \Gamma \vdash \phi$ and $\Gamma \subseteq \Delta$ imply $\Delta \vdash \phi,$

(iii) $\Gamma \vdash \phi$ and $\Delta \vdash \beta$ for every $\beta \in \Gamma$ imply $\Delta \vdash \phi.$

A consequence relation $\vdash$ is {\it finitary} if
$\Gamma \vdash \phi$ implies $\Gamma' \vdash \phi$ for some finite $\Gamma' \subseteq \Gamma$. 



\medskip
 {\bf Structural Consequence Relations}: 

A consequence relation $\vdash$ is {\it structural} if

$\Gamma \vdash \phi$  implies $\sigma(\Gamma) \vdash \sigma(\phi)$ for every substitution $\sigma$, where $\sigma(\Gamma) :=\{\sigma\alpha : \alpha \in \Gamma\}$.

\medskip
 {\bf Logics}:

A {\bf logic} (or {\bf deductive system}) is a pair $\mathcal S := \langle \mathbf{L}, \vdash_{\mathcal S} \rangle$, where $\mathbf{L}$ is a propositional
language and $\vdash_{\mathcal S}$ is a finitary and structural consequence relation on Fm$_{\mathbf L}$.


%
A {\it rule of inference} is a pair $\langle\Gamma, \phi \rangle$, 
where $\Gamma$ is a finite set of formulas (the premises of the rule) and $\phi$ is a
formula. 

One way to present a logic $\mathcal S$ is by displaying it (syntactically) in {\bf Hilbert-style}; that is, giving its axioms and rules of inference which induce a
 consequence relation $\vdash_S$ as follows:

 $\Gamma \vdash_S \phi$ if there is a a {\bf proof} (or, a {\bf derivation}) of $\phi$ from $\Gamma$, where a proof is defined as a 
 sequence of formulas $\phi_1, \dots, \phi_n$, $n \in \mathbb N$, such that
$\phi_n = \phi$, and for every $i \leq n$, one of the following conditions holds:

(i) $\phi_i  \in \Gamma$,

(ii) there is an axiom $\psi$  and a substitution $\sigma$ such that $\phi_i = \sigma \psi $, 

(iii) there is a rule $\langle \Delta, \psi \rangle$ and a substitution $\sigma$  such that $\phi_i  = \sigma\psi$  and
$\sigma(\Delta) \subseteq \{\phi_j : j < i\}$.    

\medskip
{\bf Equational Consequence }

Let $\mathbf L$ denote a language. 
 Identities in $\mathbf L$ are ordered pairs of $\mathbf L$-formulas that will be written in the form $\alpha \approx \beta$.
An interpretation $h$ in $\mathbf A$ satisfies an identity $\alpha \approx \beta$ if $h \alpha = h \beta$.  
We denote this satisfaction relation by the notation: $\mathbf{A} \models_h
\alpha \approx \beta $. An algebra $\mathbf A$ {\it satisfies the equation} $\alpha \approx \beta$ if all the interpretations in $\mathbf A$ satisfy it; in symbols,
$$\mathbf{A} \models \alpha \approx \beta \mbox{ if and only if } \mathbf{A} \models_h
\alpha \approx \beta, \mbox{ for all } h \in Hom({\bf Fm}_{\bf L},\mathbf A).$$
A class $\mathbb K$ of algebras  {\it satisfies the identity} $\alpha \approx \beta$ when all the algebras in $\mathbb K$ satisfy it; i.e.
$$\mathbb{K} \models \alpha \approx \beta \mbox{ if and only if } \mathbf{A}\models \alpha \approx \beta, \mbox{ for all }\mathbf A \in \mathbb K.$$ 

If $\bar{x}$ is a sequence of variables and $h$ is an interpretation in $\mathbf{A}$, then we write $\bar{a}$ for $h(\bar{x})$.
For a class $\mathbb K$ of $\mathbf L$-algebras, 
we define the relation $\models_{\mathbb K}$ that holds between a set $\Delta$ of identities and a single identity $\alpha \approx  \beta$ as follows: \\ 
$$\Delta \models_{\mathbb K} \alpha \approx \beta \text{ if and only if }$$

for every $\mathbf A \in \mathbb{K}$ and every interpretation $\bar{a}$ of the variables of $\Delta \cup  \{\alpha \approx \beta\}$ in $\mathbf{A}$, 

If $\phi^{\mathbf A}(\bar{a} ) = \psi^{\mathbf A} (\bar{a}) $,  for every $\phi \approx \psi \in \Delta$, then \ 
$\alpha^{\mathbf A}(\bar{a}) = \beta^{\mathbf A}(\bar{a})$.\\

In this case, we say that $\alpha \approx \beta$ is a $\mathbb K$-consequence of $\Delta$. 
The relation $\models_K$ is called the {\it semantic equational consequence relation} determined by K. \\

{\bf Algebraic Semantics} 

Let $\langle {\bf L}, \vdash_{\bf L} \rangle$ be a finitary logic (i.e., deductive system) and $\mathbb{K}$ a class of {\bf L-algebras}. $\mathbb{K}$ is called an ``algebraic semantics'' for $\langle {\bf L}, \vdash_{\bf L} \rangle$ 
if $\vdash_{\bf L}$ can be interpreted in $\vdash_\mathbb{K}$ in the following sense:

There exists a finite set $\delta_i (p) \approx \epsilon_i (p)$, for $i  \leq n$, of identities with a single variable $p$ such that, for all $\Gamma \cup \phi \subseteq Fm$, 
$$ \text{(A)} \quad \Gamma \vdash_{\bf L} \phi \Leftrightarrow  \{\delta_i [\psi /p]  \approx \epsilon_i[\psi /p], i  \leq n, \psi \in \Gamma\} 
\models_K \delta_i [\phi /p] \approx \epsilon_i [\phi /p],$$ 

where $\delta[\psi / p]$  denotes the formula obtained by the substitution of $\psi$ at every occurrence of $p$ in $\delta. $

 The identities $\delta_i \approx \epsilon_i$, for $i \leq n$, are called ``defining identities'' for $\langle L, \vdash_L\rangle$ and $\mathbb{K}$. 

\newpage
{\bf Equivalent Algebraic Semantics and Algebraizable Logic}

\medskip 
Let $\mathcal S$ be a logic over a language {\bf L} and $\mathbb K$ an algebraic semantics
of S with defining equations $\delta_i(p) \approx \epsilon_i(p)$, $i \leq n$.  Then, $\mathbb K$ is an {\bf equivalent
algebraic semantics} of $\mathbf{S}$ if there exists a finite set $\{\Delta_j (p, q) : j \leq m\}$ of
formulas in two variables satisfying the condition:\\

For every  $\phi  \approx \psi \in Eq_{\mathbf L}$ and $j \leq m$, 
$$\phi  \approx \psi 
           \ \models_K \{\delta_i(\Delta_j (\phi, \psi)) \approx \epsilon_i(\Delta_j (\phi, \psi)) : i \leq n, j \leq m\}$$            
\indent and
$$\{\delta_i(\Delta_j (\phi, \psi)) \approx \epsilon_i(\Delta_j (\phi, \psi)) : i \leq n, j \leq m\} \ \models_K  \ \phi  \approx \psi. $$\\
The set $\{\Delta_j (p, q) : j \leq m \}$ is called an {\bf equivalence system}. \\

A logic is {\bf BP-algebraizable (in the sense of Blok and Pigozzi)} if it has an equivalent algebraic semantics.\\

{\bf Axiomatic Extensions of Algebraizable logics} 

\medskip  
A logic $\mathbf{\mathit{S'}}$ is an {\it axiomatic extension} of $\mathbf{\mathit{S}}$ if $\mathbf{\mathit{S'}}$ is obtained by adjoining new axioms but keeping the rules of inference the same as in $\mathbf{\mathit{S}}$. 
Let $Ext(\mathbf{\mathit{S}})$ denote the lattice of axiomatic extensions of a logic $\mathbf{\mathit{S}}$ and 
$\mathbf{L_V(\mathbb K)}$ denote the lattice of subvarieties of a variety $\mathbb K$ of algebras. 

The following important theorems, due to Blok and Pigozzi, were first proved in 
\cite{BlPi89}.  
	
\begin{Theorem} \cite{BlPi89} \label{duallyIso}     
Let $\mathbf{\mathit{S}}$ be a BP-algebraizable logic whose equivalent algebraic semantics $\mathbb K$ is a variety.  Then
$Ext(\mathbf{\mathit{S}})$ is dually isomorphic to $\mathbf{L_V(\mathbb K)}$. \\
\end{Theorem}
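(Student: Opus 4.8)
The plan is to exploit the two translations that algebraizability provides and show they induce mutually inverse, order-reversing bijections between $Ext(\mathbf{S})$ and $\mathbf{L_V(\mathbb K)}$. Write $\tau(\phi) := \{\delta_i(\phi) \approx \epsilon_i(\phi) : i \le n\}$ for the translation of a formula into a finite set of identities furnished by the defining equations, and $\rho(\alpha \approx \beta) := \{\Delta_j(\alpha,\beta) : j \le m\}$ for the translation of an identity into a finite set of formulas furnished by the equivalence system. From the two conditions defining "equivalent algebraic semantics'' one extracts the four standard bridge facts: (1) $\Gamma \vdash_{\mathbf S} \phi$ iff $\tau(\Gamma) \models_{\mathbb K} \tau(\phi)$; (2) $E \models_{\mathbb K} \alpha \approx \beta$ iff $\rho(E) \vdash_{\mathbf S} \rho(\alpha \approx \beta)$; (3) $\phi \dashv\vdash_{\mathbf S} \rho(\tau(\phi))$; and (4) $\tau(\rho(\alpha \approx \beta))$ is $\mathbb K$-equivalent to $\alpha \approx \beta$. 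These are exactly what the defining-identity condition (A) and the equivalence-system condition recorded above encode.

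Next I would define the two maps. To an axiomatic extension $\mathbf S'$ of $\mathbf S$, obtained by adjoining a set $\Sigma$ of axioms, assign the subclass $\Phi(\mathbf S')$ cut out inside $\mathbb K$ by the identities $\bigcup_{\sigma \in \Sigma}\tau(\sigma)$; since these are genuine identities (not quasi-identities), $\Phi(\mathbf S')$ is a subvariety of $\mathbb K$, and this is precisely the step that uses the hypothesis that $\mathbb K$ is a variety. In the reverse direction, to a subvariety $\mathbb K'$ defined relative to $\mathbb K$ by a set $E$ of identities, assign the axiomatic extension $\Psi(\mathbb K')$ of $\mathbf S$ obtained by adjoining the axioms $\bigcup_{\varepsilon \in E}\rho(\varepsilon)$. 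The first task is well-definedness: $\Phi(\mathbf S')$ must depend only on $\mathbf S'$ (i.e.\ on its set of theorems) and not on the chosen axiom set $\Sigma$, and similarly for $\Psi$; both follow from bridge facts (1)--(2), which guarantee that interderivable axiom sets translate to $\mathbb K$-equivalent identity sets and conversely.

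I would then verify that $\Phi$ and $\Psi$ are mutually inverse. Computing $\Psi(\Phi(\mathbf S'))$, the new axioms are the formulas $\rho(\tau(\sigma))$ for $\sigma \in \Sigma$, which by bridge fact (3) are interderivable over $\mathbf S$ with the original axioms $\sigma$; hence the resulting extension has the same theorems and is the same element of $Ext(\mathbf S)$. Dually, $\Phi(\Psi(\mathbb K')) = \mathbb K'$ via bridge fact (4). For the order behaviour: if $\mathbf S_1 \subseteq \mathbf S_2$ in $Ext(\mathbf S)$ (so $\mathbf S_2$ has more theorems), then under $\tau$ the algebras in $\Phi(\mathbf S_2)$ satisfy all identities forced by $\Phi(\mathbf S_1)$ and more, so $\Phi(\mathbf S_2) \subseteq \Phi(\mathbf S_1)$ as classes. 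Combined with the bijection, this shows $\Phi$ is an order-reversing isomorphism, i.e.\ a dual lattice isomorphism.

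The main obstacle is not the formal shuffling of $\tau$ and $\rho$ but pinning down the exact correspondence between the \emph{syntactic} order on axiomatic extensions and the \emph{semantic} order on subvarieties, and in particular checking that adjoining \emph{axioms} (while leaving the inference rules untouched) corresponds on the algebraic side to imposing \emph{identities} rather than quasi-identities --- this is what keeps the image of $\Phi$ inside the lattice of subvarieties and is the precise role of the variety hypothesis. A complete treatment must also confirm that an axiomatic extension is determined by its set of theorems, so that $\Phi$ and $\Psi$ are genuinely functions between the two lattices, and that the finitary and structural character of $\vdash_{\mathbf S}$ is preserved throughout; these preliminary points should be dispatched before the bijection argument is declared finished.
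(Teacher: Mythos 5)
The paper offers no proof of this statement: it is imported verbatim from Blok and Pigozzi \cite{BlPi89}, so there is no in-paper argument to compare against. Your sketch is essentially the standard Blok--Pigozzi proof (the two translations $\tau$ and $\rho$, the four bridge properties, the mutually inverse order-reversing maps, and the observation that premise-free axioms translate to identities rather than quasi-identities, which is where the variety hypothesis enters), and it is correct in outline, with the preliminary points you flag at the end --- in particular that an axiomatic extension is determined by its theorem set --- being exactly the ones that need to be dispatched to make it complete.
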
 

\begin{Theorem} \cite{BlPi89} \label{Ext-Alg}
Let $\mathbf{\mathit{S}}$ be a BP-algebraizable logic and $\mathbf{\mathit{S'}} $  be an axiomatic extension of $\mathbf{\mathit{S}}$.  Then  $Ext(\mathbf{\mathit{S'}})$ is also BP-algebraizable.
\end{Theorem}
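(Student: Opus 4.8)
The plan is to reduce the statement to the intrinsic, purely syntactic characterization of BP-algebraizability due to Blok and Pigozzi, and then to exploit the fact that an axiomatic extension can only enlarge the consequence relation. First I would recall that a logic $\mathbf{\mathit{S}}$ is BP-algebraizable with equivalence system $\{\Delta_j(p,q) : j \leq m\}$ and defining equations $\{\delta_i(p) \approx \epsilon_i(p) : i \leq n\}$ precisely when these transformers satisfy a short list of conditions, each of which is an assertion of the form $\Gamma \vdash_{\mathbf{\mathit{S}}} \phi$ for finite $\Gamma$: reflexivity $\vdash_{\mathbf{\mathit{S}}} \Delta_j(p,p)$; symmetry $\Delta(p,q) \vdash_{\mathbf{\mathit{S}}} \Delta(q,p)$; transitivity $\Delta(p,q) \cup \Delta(q,r) \vdash_{\mathbf{\mathit{S}}} \Delta(p,r)$; replacement $\bigcup_k \Delta(p_k,q_k) \vdash_{\mathbf{\mathit{S}}} \Delta(c(\vec p), c(\vec q))$ for every connective $c$; and the interderivability $p \dashv\vdash_{\mathbf{\mathit{S}}} \bigcup_i \Delta(\delta_i(p), \epsilon_i(p))$. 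By the sufficiency direction of that characterization, any pair of transformers meeting these conditions witnesses algebraizability, so it suffices to transfer the conditions from $\mathbf{\mathit{S}}$ to the extension.

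The second step is the observation that $\vdash_{\mathbf{\mathit{S}}} \subseteq \vdash_{\mathbf{\mathit{S'}}}$ whenever $\mathbf{\mathit{S'}}$ is an axiomatic extension of $\mathbf{\mathit{S}}$. This is immediate from the Hilbert-style description of derivations recalled above: $\mathbf{\mathit{S'}}$ retains every axiom and every rule of inference of $\mathbf{\mathit{S}}$, adding only further axioms, so any proof of $\phi$ from $\Gamma$ in $\mathbf{\mathit{S}}$ is verbatim a proof in $\mathbf{\mathit{S'}}$. Hence every derivability statement valid in $\mathbf{\mathit{S}}$ remains valid in $\mathbf{\mathit{S'}}$.

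Combining the two steps, I would apply this monotonicity to each of the conditions listed above. The reflexivity, symmetry, transitivity, and replacement conditions are single derivability statements and transfer at once. The interderivability condition, though it reads as an equivalence, is really the conjunction of the two one-directional statements $p \vdash_{\mathbf{\mathit{S}}} \bigcup_i \Delta(\delta_i(p),\epsilon_i(p))$ and $\bigcup_i \Delta(\delta_i(p),\epsilon_i(p)) \vdash_{\mathbf{\mathit{S}}} p$, each of which is monotone and therefore survives the passage to $\mathbf{\mathit{S'}}$. Thus the very same $\Delta_j$ and $\delta_i \approx \epsilon_i$ satisfy all the characterizing conditions for $\vdash_{\mathbf{\mathit{S'}}}$, so $\mathbf{\mathit{S'}}$ is BP-algebraizable; and since every member of $Ext(\mathbf{\mathit{S'}})$ is itself an axiomatic extension of $\mathbf{\mathit{S}}$, the identical argument applies to it uniformly, which is the assertion to be proved.

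The step I expect to require the most care is the interderivability condition: one must be sure it is genuinely a pair of derivability claims over $\vdash_{\mathbf{\mathit{S}}}$ rather than a semantic condition on the class $\mathbb{K}$, since only the syntactic formulation is manifestly monotone under extension. It is also worth remarking that this argument yields algebraizability of $\mathbf{\mathit{S'}}$ with the same transformers but does not by itself identify its equivalent algebraic semantics; one expects that semantics to be the subvariety of $\mathbb{K}$ axiomatized by the equations $\delta_i(\chi) \approx \epsilon_i(\chi)$ associated with the new axioms $\chi$ of $\mathbf{\mathit{S'}}$, and confirming this would be the natural refinement, though it is not needed for the bare algebraizability claim.
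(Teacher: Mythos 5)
Your proposal is correct. The paper offers no proof of this statement --- it is quoted directly from Blok and Pigozzi \cite{BlPi89} --- and your argument is exactly the standard one given there (via the intrinsic syntactic characterization of algebraizability by reflexivity, symmetry, transitivity, replacement, and the interderivability $p \dashv\vdash \bigcup_i \Delta_j(\delta_i(p),\epsilon_i(p))$, all of which are finitary derivability claims and hence persist under the inclusion $\vdash_{\mathcal S} \subseteq \vdash_{\mathcal S'}$). Your closing remark about identifying the equivalent algebraic semantics of the extension is also on target: that refinement is precisely what the paper records separately as Theorem \ref{T5.9}.
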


\subsection{The Dually Hemimorphic Semi-Heyting Logic}

\

\medskip
\indent  The new logics that we intend to present are going to be axiomatic extensions of the logic  called ``dually hemimorphic semi-Heyting logic'' 
($\mathcal{DHMH}$, for short) which is introduced and 
 shown, in \cite{CoSa22}, to be BP-algebraizable 
 with the variety $\mathbb{DHMSH}$ of dually hemimorphic semi-Heyting algebras as its equivalent algebraic semantics.  Therefore, we first describe the logic $\mathcal{DHMH}$,

 
Following (\cite{CoSa22}), the {\it dually hemimorphic semi-Heyting logic} ($\mathcal{DHMSH}$),  is defined in the language 
$\langle \vee, \wedge,   \to, \sim,  \bot, \top \rangle$ and has the following axioms and rules of inference, where $\to_H$  is defined by 
$\alpha \to_H \beta := \alpha \to (\alpha \land \beta)$ and \\
$\alpha \leftrightarrow_H \beta :=: (\alpha \to_H \beta) \land (\beta \to_H \alpha)$: \\  

{\bf \noindent AXIOMS:}  
\begin{thlist}
\item[1] \noindent \rm $\alpha \to_H (\alpha \vee \beta)$, \label{axioma_supremo_izq}  

\item[2] \noindent $\beta \to_H (\alpha \vee \beta)$, \label{axioma_supremo_der}  

\item[3] \noindent $(\alpha \to_H  \gamma) \to_H [(\beta \to_H  \gamma) \to_H \{(\alpha \vee \beta) \to_H \gamma\}]$, \label{axioma_supremo_cota_inferior} 

\item[4] \noindent $(\alpha \wedge \beta) \to_H {\alpha}$, \label{axioma_infimo_izq} 

\item[5] \noindent $(\gamma \to_H  \alpha) \to_H  [\gamma \to_H  \beta) \to_H  (\gamma \to_H  (\alpha \wedge \beta))]$, \label{axioma_infimo_cota_superior} 

\item[6] \noindent $\top$, \label{axioma_top} 

\item[7] \noindent $\bot \to_H \alpha$, \label{axioma_bot} 

\item[8] \noindent $[(\alpha \land \beta) \to_H  \gamma] \to_H  [\alpha \to_H  (\beta \to_H  \gamma)]$, \label{axioma_condicRes_InfAImplic}  

\item[9] \noindent $[\alpha \to_H  (\beta \to_H  \gamma )] \to_H  [(\alpha \wedge \beta) \to_H  \gamma]$, \label{axioma_condicRes_ImplicAInf}  

\item[10] \noindent $(\alpha \to_H  \beta) \to_H  [(\beta \to_H  \alpha) \to_H ((\alpha \to \gamma) \to_H  (\beta \to \gamma))]$, \label{axioma_BuenaDefImplic2} \label{axioma_ImplicADerecha}  

\item[11] \noindent $(\alpha \to_H \beta) \to_H  [(\beta \to_H  \alpha) \to_H ((\gamma \to \beta) \to_H  (\gamma \to \alpha))]$, \label{axioma_BuenaDefImplic1} \label{axioma_ImplicAIzquierda}  


\item[12]  $\top \to_H  \ \sim\bot$, \label{axiom_bot_neg} 

\item[13] $\sim\top \to_H  \bot$, \label{axiom_top_neg} 

\item[14]  $\sim(\alpha \wedge \beta) \to_H  (\sim\alpha \ \vee \sim\beta)$. \label{axiom_DM_law1} \\ 


\end{thlist}

{\bf \noindent RULES OF INFERENCE:}\\

	       {\bf (MP)}  \quad From $\phi$ and $\phi \to_H  {\gamma}$, deduce $\gamma$  ({\rm semi-Modus Ponens}),\\
 
          {\bf (CP)} \quad From  $\phi \to_H  \gamma$, deduce $\sim\gamma \to_H  \sim\phi$ \rm(semi-Contraposition Rule\rm).\\

 The following theorem is proved in \cite[Corollary 5.4]{CoSa22}.
\begin{Theorem} \label{CorExt} 
 The logic $\mathcal{DHMSH}$ is BP-algebraizable, and  the variety  
 $\mathbb{DHMSH}$ is the equivalent algebraic semantics for $\mathcal{DHMSH}$ with the defining identity $p \approx p \to_H p$ \rm(equivalently, $p \approx 1$) and the equivalence formulas $\Delta = \{p \to_H q, q \to_H p \}$.
 \end{Theorem}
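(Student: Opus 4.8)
The plan is to prove Theorem \ref{CorExt} by verifying the intrinsic, syntactic characterization of BP-algebraizability due to Blok and Pigozzi \cite{BlPi89}: a finitary logic $\mathcal{S}$ is algebraizable with a prescribed set $\Delta(p,q)$ of equivalence formulas and prescribed defining equations $\delta_i(p)\approx\epsilon_i(p)$ exactly when $\Delta$ behaves, provably in $\mathcal{S}$, as a congruence that is interchangeable with the defining equations. Here $\Delta(p,q)=\{p\to_H q,\ q\to_H p\}$ and the single defining equation is $p\approx p\to_H p$; since $p\to_H p := p\to(p\land p)=p\to p$ is a theorem (its value is $1$ by (SH4)), the defining equation is equivalent to $p\approx 1$, so the intended translation sends a formula $\phi$ to the equation $\phi\approx 1$. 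Thus the first task is to show that $\vdash_{\mathcal{DHMSH}}$ corresponds to $\models_{\mathbb{DHMSH}}$ under the pair $(\Delta,\{p\approx 1\})$.

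I would establish the required families of conditions in turn. Reflexivity $\vdash_{\mathcal{DHMSH}}\Delta(p,p)$ amounts to $\vdash p\to_H p$, immediate from (SH4). Symmetry $\Delta(p,q)\vdash\Delta(q,p)$ is trivial since $\Delta$ is a symmetric set of formulas. Transitivity $\Delta(p,q)\cup\Delta(q,r)\vdash\Delta(p,r)$ reduces to the provable transitivity of $\to_H$, i.e. $\{p\to_H q,\ q\to_H r\}\vdash p\to_H r$, which I would derive from the Hilbert-style axioms together with (MP). The congruence conditions for the binary connectives, namely $\Delta(p_1,q_1)\cup\Delta(p_2,q_2)\vdash\Delta(c(p_1,p_2),c(q_1,q_2))$ for $c\in\{\lor,\land,\to\}$, follow from the lattice axioms (1)--(5) for $\lor,\land$ and from the replacement axioms (10) and (11) for $\to$ (one handling each argument, via two applications of (MP)). Finally, the bridge condition $p\dashv\vdash_{\mathcal{DHMSH}}\Delta(p,\ p\to_H p)$ collapses, using that $p\to_H p$ is a theorem, to $p\dashv\vdash\top\to_H p$, which holds because $\top\to_H p\approx p$ in every semi-Heyting algebra.

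The step I expect to be the \emph{main obstacle} is the congruence condition for the unary dual-hemimorphism $\sim$, that is $\Delta(p,q)\vdash\Delta(\sim p,\sim q)$. This is precisely where the contraposition rule (CP) and the $\land$-De Morgan axiom (14) are needed: from $p\to_H q$ the rule (CP) yields $\sim q\to_H\sim p$, and symmetrically from $q\to_H p$ one obtains $\sim p\to_H\sim q$, which together are $\Delta(\sim p,\sim q)$. The delicate point is that (CP) applies to derived implications rather than as a deduction-theorem-style metarule, so one must keep every derivation valid under arbitrary sets of hypotheses; assembling enough derived rules (a local deduction theorem for $\to_H$ and the admissibility of replacement under (CP)) is the technical heart of the argument, and is exactly what is carried out in \cite{CoSa22}.

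Once all these conditions are verified, the Blok--Pigozzi characterization guarantees that $\mathcal{DHMSH}$ is BP-algebraizable and that its equivalent algebraic semantics is the quasivariety presented by the translated axioms and rules. To finish, I would identify this quasivariety with $\mathbb{DHMSH}$ by checking that the $\phi\mapsto(\phi\approx 1)$ translations of axioms (1)--(14) and of (MP), (CP) axiomatize exactly conditions (a)--(d) defining dually hemimorphic semi-Heyting algebras, and that $\mathbb{DHMSH}$ is a variety, so that the equivalent algebraic semantics is a variety as claimed. This soundness-and-completeness bookkeeping, together with the congruence verification above, yields Theorem \ref{CorExt}.
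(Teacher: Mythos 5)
The paper does not actually prove Theorem \ref{CorExt}: it is imported verbatim from \cite[Corollary 5.4]{CoSa22}, so there is no in-paper argument to compare against. Your outline is the standard route one would take (and, as you note, essentially the route taken in \cite{CoSa22}): verify the Blok--Pigozzi intrinsic characterization for $\Delta=\{p\to_H q,\ q\to_H p\}$ and the defining identity $p\approx p\to_H p$, then identify the resulting quasivariety with $\mathbb{DHMSH}$. The decomposition into reflexivity, transitivity, congruence conditions for $\lor,\land,\to$ via axioms (1)--(5), (10), (11), the $\sim$-congruence condition via (CP), and the bridge condition is correct, and you rightly isolate the $\sim$-case and the derived-rule machinery as the technical heart.

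Two caveats. First, several steps are justified semantically where the intrinsic characterization demands syntactic derivations in $\vdash_{\mathcal{DHMSH}}$: reflexivity is argued ``from (SH4)'' and the bridge condition from ``$\top\to_H p\approx p$ in every semi-Heyting algebra,'' but (SH1)--(SH4) are properties of the algebras, not axioms of the Hilbert system, and appealing to them before completeness is established is circular. These must be replaced by actual derivations from axioms (1)--(14) and the two rules; for instance $\vdash p\to_H p$ is not an axiom and needs a derivation. Second, the genuinely hard content --- provable transitivity of $\to_H$, the replacement lemmas needed to run (10) and (11) in both arguments of $\to$, and the soundness-and-completeness bookkeeping identifying the translated presentation with the equational axioms of $\mathbb{DHMSH}$ --- is deferred entirely to \cite{CoSa22}. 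So as a description of the proof strategy your proposal is accurate, but as a standalone proof it leaves the load-bearing derivations unexecuted.
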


The following theorem is immediate from Theorem \ref{duallyIso} and Theorem \ref{CorExt}.
\begin{Theorem} 
The lattice $Ext(\mathbf{\mathit{DHMH}})$ of axiomatic extensions of $\mathbf{\mathit{DHMSH}}$ is dually isomorphic to the lattice 
$\mathbf{L_V(\mathbb{DHMSH})}$ of subvarieties of the variety $\mathbb{DHMH}$.
\end{Theorem}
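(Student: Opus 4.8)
The plan is to invoke the Blok--Pigozzi transfer theorem (Theorem \ref{duallyIso}) directly, instantiating it with $\mathbf{S} := \mathcal{DHMSH}$ and $\mathbb{K} := \mathbb{DHMSH}$. First I would recall from Theorem \ref{CorExt} that $\mathcal{DHMSH}$ is BP-algebraizable and that $\mathbb{DHMSH}$ is its equivalent algebraic semantics, with defining identity $p \approx p \to_H p$ and equivalence formulas $\Delta = \{p \to_H q,\ q \to_H p\}$. This supplies precisely the first hypothesis of Theorem \ref{duallyIso}, namely that $\mathbf{S}$ is a BP-algebraizable logic.

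Next I would check the remaining hypothesis of Theorem \ref{duallyIso}, that the equivalent algebraic semantics is a \emph{variety}. Since $\mathbb{DHMSH}$ is by definition the class of dually hemimorphic semi-Heyting algebras, axiomatized by the semi-Heyting conditions (SH1)--(SH4) together with the equations $0' \approx 1$, $1' \approx 0$, and $(x \wedge y)' \approx x' \vee y'$, it is an equationally defined class and hence a variety by Birkhoff's theorem. With both hypotheses in place, Theorem \ref{duallyIso} immediately yields that $Ext(\mathcal{DHMSH})$ is dually isomorphic to $\mathbf{L_V(\mathbb{DHMSH})}$, which is exactly the assertion.

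I expect no genuine obstacle: the statement is a direct specialization of the general Blok--Pigozzi correspondence, and the only thing requiring verification is that the relevant class is a variety, which is transparent from its equational presentation. The single point meriting care is purely bookkeeping, matching the logic $\mathcal{DHMSH}$ and its semantics $\mathbb{DHMSH}$ as named in Theorem \ref{CorExt} against the occurrences written $\mathbf{\mathit{DHMH}}$ and $\mathbb{DHMH}$ in the statement, so that the logic and its equivalent algebraic semantics are paired correctly when Theorem \ref{duallyIso} is applied.
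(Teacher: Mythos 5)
Your proposal is correct and matches the paper exactly: the paper derives this theorem as an immediate consequence of Theorem \ref{duallyIso} (Blok--Pigozzi) and Theorem \ref{CorExt}, which is precisely your instantiation. Your additional check that $\mathbb{DHMSH}$ is a variety, and your remark about the typographical mismatch between $\mathcal{DHMH}$/$\mathcal{DHMSH}$ in the statement, are both sound bookkeeping that the paper leaves implicit.
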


The following theorem, which is an immediate consequence of Theorem \ref{DHMSHduallyIso}, 
is crucial in the rest of this section.

 \begin{Theorem} \cite[Theorem 5.9]{CoSa22} \label{T5.9} 
	For every axiomatic extension $\mathcal E$ of the logic $\mathcal{DHMSH}$,           
	$\mathbf{Mod}(\mathcal E)$ is an equivalent algebraic semantics of $\mathcal E$, where $\mathbf{Mod}(\mathcal E) := \{\mathbf{A} \in \mathbb{DHMSH}: \mathbf{A} \models \delta \approx 1, \mbox{ for every } \delta \in \mathcal E\} $. 
	
 \end{Theorem}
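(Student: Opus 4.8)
The plan is to derive Theorem \ref{T5.9} directly from the Blok--Pigozzi machinery recorded in Theorems \ref{duallyIso} and \ref{Ext-Alg}, together with the explicit algebraizability data for $\mathcal{DHMSH}$ supplied by Theorem \ref{CorExt}. First I would invoke Theorem \ref{Ext-Alg}: since $\mathcal{DHMSH}$ is BP-algebraizable and $\mathcal E$ is by hypothesis an axiomatic extension of $\mathcal{DHMSH}$, the logic $\mathcal E$ is itself BP-algebraizable. The crucial observation I would make explicit is that an axiomatic extension inherits the defining identities and the equivalence system of the base logic; thus $\mathcal E$ is algebraizable with the \emph{same} defining identity $p \approx 1$ and the \emph{same} equivalence formulas $\Delta = \{p \to_H q,\, q \to_H p\}$ as those given for $\mathcal{DHMSH}$ in Theorem \ref{CorExt}. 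Consequently, to prove the theorem it suffices to identify the equivalent algebraic semantics of $\mathcal E$ and check that it coincides with $\mathbf{Mod}(\mathcal E)$.

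To carry out that identification I would use the dual isomorphism of Theorem \ref{duallyIso}, specialized to $\mathcal{DHMSH}$, whose equivalent algebraic semantics $\mathbb{DHMSH}$ is a variety. Under this correspondence every axiomatic extension $\mathcal E$ is matched with a unique subvariety $\mathbb{V}_{\mathcal E}$ of $\mathbb{DHMSH}$, and the content of the Blok--Pigozzi correspondence is that $\mathbb{V}_{\mathcal E}$ is obtained from $\mathbb{DHMSH}$ by imposing exactly the equations produced by translating the new axioms of $\mathcal E$ through the defining identity. Since that defining identity is $p \approx 1$, the translation sends each axiom $\delta$ to the equation $\delta \approx 1$, so that
\[
\mathbb{V}_{\mathcal E} = \{\mathbf{A} \in \mathbb{DHMSH} : \mathbf{A} \models \delta \approx 1 \text{ for every } \delta \in \mathcal E\}.
\]
This is precisely the class $\mathbf{Mod}(\mathcal E)$ of the statement. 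Because the subvariety assigned to $\mathcal E$ by the dual isomorphism is, by construction, an equivalent algebraic semantics of $\mathcal E$, I would then conclude that $\mathbf{Mod}(\mathcal E)$ is an equivalent algebraic semantics of $\mathcal E$, as required.

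I expect the main obstacle to lie in the bookkeeping of the second paragraph: one must verify carefully that the abstract dual isomorphism of Theorem \ref{duallyIso} is realized concretely by the translation $\delta \mapsto (\delta \approx 1)$, i.e.\ that the subvariety corresponding to $\mathcal E$ is cut out precisely by the equations $\delta \approx 1$ ranging over the theorems (equivalently, the adjoined axioms) of $\mathcal E$. This amounts to unwinding the Blok--Pigozzi proof of Theorem \ref{duallyIso} to confirm that the lattice anti-isomorphism respects the chosen defining equation; once this is in hand, the equality $\mathbb{V}_{\mathcal E} = \mathbf{Mod}(\mathcal E)$, and hence the theorem, follow immediately. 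A minor secondary point is to note that it is harmless whether ``$\delta \in \mathcal E$'' is read as ranging over all theorems of $\mathcal E$ or only over its adjoined axioms, since the algebras in $\mathbf{Mod}(\mathcal E)$ validate $\delta \approx 1$ for one family exactly when they validate it for the other.
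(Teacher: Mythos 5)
Your argument is correct and follows essentially the same route the paper takes: the paper gives no proof of its own for this statement, importing it from \cite[Theorem 5.9]{CoSa22} and remarking only that it is an immediate consequence of the Blok--Pigozzi dual isomorphism theorem (Theorem \ref{duallyIso}), which is precisely the machinery you invoke. Your additional observations --- that the axiomatic extension inherits the defining identity $p \approx 1$ and the equivalence system $\Delta$, and that the corresponding subvariety is cut out by the equations $\delta \approx 1$ --- are the standard content of the Blok--Pigozzi correspondence and fill in the details the paper leaves to the cited reference.
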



We are ready to present a new logic called $\mathcal{DPCH}$.		
\begin{definition}
The logic $\mathcal{DPCH}$ is defined in \cite{CoSa22}, as an axiomatic extension of the logic $\mathcal{DHMSH}$, by
the addition of 
the following axioms:
\begin{thlist}
\item[15] $(\alpha \land \beta) \to \alpha,$ 

\item[16] $\sim \bot \to_H  \top$,  


\item[17] $\sim \sim (\alpha \lor \beta) \  \leftrightarrow_H \    (\sim \sim \alpha \ \lor \ \sim \sim \beta), $

\item[18] $ (\sim \sim \alpha \lor \alpha) \ \leftrightarrow_H \   \alpha,$

\item[19] $\alpha \ \lor \sim \alpha$ (equivalently, $\sim \alpha \ \lor \sim \sim \alpha$).
\end{thlist}
\end{definition}

The following theorem is proved in \cite{CoSa22}.
\begin{Theorem} {\rm(\cite{CoSa22})} \label{T_DPCH}
The logic $\mathcal{DPCH}$ is BP-algebraizable with 
   $\mathbb{DPCH}$ 
   as its equivalent algebraic semantics. 
\end{Theorem}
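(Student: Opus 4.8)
The plan is to obtain this theorem as a direct corollary of the abstract framework already set up for the base logic, namely Theorem \ref{T5.9}, rather than to algebraize $\mathcal{DPCH}$ by hand. By definition $\mathcal{DPCH}$ is the axiomatic extension of $\mathcal{DHMSH}$ obtained by adjoining the axioms (15)--(19). Hence Theorem \ref{T5.9} applies verbatim and tells us that $\mathbf{Mod}(\mathcal{DPCH})$ is an equivalent algebraic semantics of $\mathcal{DPCH}$, the defining identity $p \approx 1$ and the equivalence system $\Delta = \{p \to_H q,\, q \to_H p\}$ being inherited from Theorem \ref{CorExt}. Thus BP-algebraizability is automatic, and the whole content of the theorem reduces to the purely algebraic identification
$$\mathbf{Mod}(\mathcal{DPCH}) = \mathbb{DPCH},$$
which I would establish by showing the two classes of algebras satisfy the same defining identities once $\sim$ (equivalently $'$) is read as the dual pseudocomplement $^+$.

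First I would record the elementary translation device: in any semi-Heyting algebra one has $\alpha \to_H \beta \approx 1$ iff $\alpha \leq \beta$, and therefore $\alpha \leftrightarrow_H \beta \approx 1$ iff $\alpha = \beta$; this is immediate from (SH2) and (SH4). Applying it to the defining identity $\delta \approx 1$ for each new axiom $\delta$, and writing $^+$ for $\sim$, the axioms collapse to familiar identities: (15) becomes the Heyting law $(x \wedge y) \to x \approx 1$, which promotes the semi-Heyting reduct to a Heyting algebra (condition (a) of $\mathbb{DPCH}$); (17) becomes $(x \vee y)^{++} \approx x^{++} \vee y^{++}$; (18) becomes $x^{++} \leq x$; and (19) becomes $x \vee x^+ \approx 1$. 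Together with the $\wedge$-De Morgan law $(x \wedge y)^+ \approx x^+ \vee y^+$ and the end-point laws $0^+ \approx 1$, $1^+ \approx 0$, all of which are already part of the definition of $\mathbb{DHMSH}$, these are exactly the dual forms of the Ribenboim--Sankappanavar identities (S1)--(S5). Consequently $\langle A, \vee, \wedge, {}^+, 0, 1\rangle$ is a dual $p$-algebra (condition (b)), and every member of $\mathbf{Mod}(\mathcal{DPCH})$ lies in $\mathbb{DPCH}$.

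For the reverse inclusion I would take an arbitrary $\mathbf A \in \mathbb{DPCH}$, set $\sim := {}^+$, and check that $\mathbf A$ models $\mathcal{DHMSH}$ together with (15)--(19). The Heyting reduct is in particular a semi-Heyting algebra satisfying (H), so (15) holds; the conditions $0^+ \approx 1$, $1^+ \approx 0$ and $(x \wedge y)^+ \approx x^+ \vee y^+$ (the dual-hemimorphism requirements) hold by the definition of the dual pseudocomplement and by Lemma \ref{lemma_properties_RDPCH}\,(\ref{140420_02}); and (17)--(19) are precisely the remaining dual $p$-algebra identities, valid in any dual $p$-algebra. This gives $\mathbb{DPCH} \subseteq \mathbf{Mod}(\mathcal{DPCH})$ and hence equality.

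The step I expect to demand the most care is verifying that the four identities extracted from (15)--(19), in the presence of the ambient $\mathbb{DHMSH}$-identities, genuinely force $^+$ to be the \emph{dual pseudocomplement} (the least element whose join with $x$ is $1$) and $\to$ to be the \emph{Heyting residuum}, not merely some semi-Heyting implication. The clean way to discharge this is to invoke the equational characterization of $p$-algebras quoted after Ribenboim's theorem, read in its dual form, to conclude that (S1)--(S5) (dualized) define dual $p$-algebras, together with the standard fact that adjoining (H) turns a semi-Heyting algebra into a Heyting algebra. The surrounding simplifications of $\to_H$ and $\leftrightarrow_H$ are routine lattice computations, which I would either leave to the reader or defer to \cite{CoSa22}.
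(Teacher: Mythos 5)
The paper does not actually prove Theorem \ref{T_DPCH}: it is imported verbatim from \cite{CoSa22} with no argument given. Your reconstruction is nevertheless correct and is exactly in the spirit of the machinery the paper itself deploys for the regular versions (Corollaries \ref{alg_semantics1}--\ref{alg_semantics3} are obtained by applying Theorem \ref{T5.9} to identify $\mathbf{Mod}(\mathcal{E})$ with the intended variety). Your two key moves are sound: (i) BP-algebraizability of any axiomatic extension of $\mathcal{DHMSH}$ is automatic from Theorems \ref{CorExt} and \ref{Ext-Alg}/\ref{T5.9}, so everything reduces to $\mathbf{Mod}(\mathcal{DPCH})=\mathbb{DPCH}$; and (ii) the translation $\alpha\to_H\beta\approx 1$ iff $\alpha\le\beta$ (hence $\leftrightarrow_H$ iff equality), which follows from (SH2) and (SH4) as you say, converts axioms (15)--(19) into $(x\wedge y)\to x\approx 1$ (promoting the semi-Heyting reduct to Heyting), $(x\vee y)^{++}\approx x^{++}\vee x^{++}$-type identities, $x^{++}\le x$, and $x\vee x^{+}\approx 1$, which together with the $\wedge$-De Morgan law and endpoint conditions already present in $\mathbb{DHMSH}$ are precisely the duals of (S1)--(S5) from the Sankappanavar equational basis quoted in Section 1 (with the noted replacement of (S5) by $x\wedge x^{*}\approx 0$, dually $x\vee x^{+}\approx 1$). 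The converse inclusion is the routine soundness check you describe. The only caveat worth recording is that your argument leans on the dual of the equational characterization of $p$-algebras and on the soundness half of Theorem \ref{CorExt}, both of which are only cited, not proved, in this paper; so your proof is a faithful reconstruction of what \cite{CoSa22} must do rather than something checkable line-by-line against the present text. No gap.
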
                   

We now define the logic $\mathcal{RDPCH}$.
\begin{definition}
Let $\mathcal{RDPCH}$  be defined as the axiomatic extension of the logic $\mathcal{DPCH}$ 
by the axiom:
\begin{equation} \tag{$\mathcal{ M}_1$}
  ((\alpha \wedge \sim \alpha) \vee (\beta \vee \beta^{\ast})) \leftrightarrow_H (\beta \vee \beta^{\ast}), 
    \end{equation}
  where $\beta^{\ast} = \beta \to 0$.

 \end{definition}
 
 The following corollary is immediate from Theorem \ref{T5.9}  and Theorem \ref{T_DPCH}.
 
 \begin{Corollary}  \label{alg_semantics1} 
The logic $\mathcal{RDPCH}$ is BP-algebraizable 
with the variety
   $\mathbb{RDPCH}$ 
 as its equivalent algebraic semantics.  
\end{Corollary}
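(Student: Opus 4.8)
The plan is to deduce the result directly from the two cited theorems, the only real content being the identification of the class $\mathbf{Mod}(\mathcal{RDPCH})$ with $\mathbb{RDPCH}$. First I would observe that, by its very definition, $\mathcal{RDPCH}$ is an axiomatic extension of $\mathcal{DPCH}$, which is in turn an axiomatic extension of $\mathcal{DHMSH}$; hence $\mathcal{RDPCH}$ is an axiomatic extension of $\mathcal{DHMSH}$. Theorem \ref{T5.9} then applies verbatim and yields at once that $\mathbf{Mod}(\mathcal{RDPCH})$ is an equivalent algebraic semantics of $\mathcal{RDPCH}$, with the defining identity $p \approx 1$ and the equivalence formulas $\Delta = \{p \to_H q,\ q \to_H p\}$ inherited from $\mathcal{DHMSH}$. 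It therefore remains only to show that $\mathbf{Mod}(\mathcal{RDPCH}) = \mathbb{RDPCH}$.

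To this end I would invoke Theorem \ref{T_DPCH}, which says that the equivalent algebraic semantics of $\mathcal{DPCH}$ is $\mathbb{DPCH}$, so that $\mathbf{Mod}(\mathcal{DPCH}) = \mathbb{DPCH}$. Since $\mathcal{RDPCH}$ is obtained from $\mathcal{DPCH}$ by adjoining exactly the single axiom $(\mathcal{M}_1)$, the definition of $\mathbf{Mod}$ gives
\[\mathbf{Mod}(\mathcal{RDPCH}) = \{\mathbf A \in \mathbb{DPCH} : \mathbf A \models (\mathcal{M}_1) \approx 1\}.\]
Thus the whole matter reduces to translating the logical axiom $(\mathcal{M}_1)$ into an identity and recognizing it.

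The key step is this translation. Under the defining identity $p \approx 1$, the condition $(\mathcal{M}_1) \approx 1$ reads $((x \wedge x^+) \vee (y \vee y^*)) \leftrightarrow_H (y \vee y^*) \approx 1$, where $\sim$ is interpreted as the dual pseudocomplement $^+$ and $y^* = y \to 0$. I would then use the semi-Heyting axioms to verify that, in any member of $\mathbb{DHMSH}$, one has $\varphi \leftrightarrow_H \psi = 1$ if and only if $\varphi = \psi$: indeed, $\varphi \to_H \psi = \varphi \to (\varphi \wedge \psi) = 1$ forces $\varphi = \varphi \wedge (\varphi \to (\varphi \wedge \psi)) = \varphi \wedge \psi$ by (SH2), i.e.\ $\varphi \leq \psi$, while the converse direction uses (SH4). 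Consequently $(\mathcal{M}_1) \approx 1$ is equivalent to the identity $(x \wedge x^+) \vee (y \vee y^*) \approx y \vee y^*$, which is precisely the regularity identity (M). Hence $\mathbf{Mod}(\mathcal{RDPCH})$ consists exactly of the regular members of $\mathbb{DPCH}$, that is, $\mathbf{Mod}(\mathcal{RDPCH}) = \mathbb{RDPCH}$, and the corollary follows.

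The only step demanding genuine care is this last translation of $(\mathcal{M}_1)$ into (M): one must confirm both that $\leftrightarrow_H$ collapses to honest equality under the $p \approx 1$ translation and that the connective $\sim$ of the logic is interpreted as the dual pseudocomplement $^+$ in $\mathbb{DPCH}$ (this being forced by the $\mathcal{DPCH}$-axioms). Once these two points are in hand, the remainder is immediate bookkeeping layered on top of Theorems \ref{T5.9} and \ref{T_DPCH}.
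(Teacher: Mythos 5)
Your proposal is correct and follows exactly the route the paper intends: the paper's entire ``proof'' is the remark that the corollary is immediate from Theorem \ref{T5.9} and Theorem \ref{T_DPCH}, and you have simply filled in the details (identifying $\mathbf{Mod}(\mathcal{RDPCH})$ with $\mathbb{RDPCH}$ by translating the axiom $(\mathcal{M}_1)$ into the regularity identity (M) via the collapse of $\leftrightarrow_H$ to equality under the defining identity $p \approx 1$). Your elaboration of the translation step, including the check that $\sim$ is interpreted as $^+$, is sound and more explicit than what the paper provides.
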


 The logic $\mathcal{RPCH}^d$ is defined by dualizing the axioms and inference rules of the logic $\mathcal{RDPCH}$.  
 
 Hence we get the following corollary.
 
 \begin{Corollary}  \label{alg_semantics2}
The logic $\mathcal{RPCH}^d$ is BP-algebraizable with  
 the variety
   $\mathbb{RPCH}^d$ 
 as its equivalent algebraic semantics.  
\end{Corollary}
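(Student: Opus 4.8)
The plan is to prove Corollary \ref{alg_semantics2} by a pure duality argument, leveraging the fact that $\mathbb{RPCH}^d$ and its logic $\mathcal{RPCH}^d$ are obtained from $\mathbb{RDPCH}$ and $\mathcal{RDPCH}$ by systematically interchanging the roles of the lattice operations $\vee \leftrightarrow \wedge$, the bounds $0 \leftrightarrow 1$, the implication and coimplication $\to \leftrightarrow \leftarrow$, and the pseudocomplement and dual pseudocomplement $^* \leftrightarrow {}^+$. First I would observe that Corollary \ref{alg_semantics1} already establishes that $\mathcal{RDPCH}$ is BP-algebraizable with $\mathbb{RDPCH}$ as its equivalent algebraic semantics, via the defining identity $p \approx p \to_H p$ and the equivalence formulas $\Delta = \{p \to_H q,\, q \to_H p\}$ inherited from Theorem \ref{CorExt}. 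The dualizing map sends each of these syntactic ingredients to its order-dual, so the transported defining identity and equivalence system witness the algebraizability of $\mathcal{RPCH}^d$ with $\mathbb{RPCH}^d$.

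Concretely, I would make the duality functorial: let $\partial$ denote the involution on formulas (and on identities) that swaps $\vee$ with $\wedge$, $\bot$ with $\top$, $\to$ with $\leftarrow$, and $\sim$-based pseudocomplement terms with their dual analogues. Since $\mathcal{RPCH}^d$ is \emph{defined} as the image under $\partial$ of the axioms and rules of $\mathcal{RDPCH}$, a derivation $\Gamma \vdash_{\mathcal{RDPCH}} \phi$ corresponds bijectively, line by line, to a derivation $\partial\Gamma \vdash_{\mathcal{RPCH}^d} \partial\phi$; this gives the syntactic equivalence of the two consequence relations under $\partial$. On the algebraic side, the assignment $\mathbf{A} \mapsto \mathbf{A}^{\partial}$ (the order-dual algebra, with operations relabelled) is a categorical isomorphism between $\mathbb{RDPCH}$ and $\mathbb{RPCH}^d$, as already noted in the excerpt where it states that $\mathbb{RPCH}^d$ consists precisely of the algebras dual to members of $\mathbb{RDPCH}$. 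Consequently $\models_{\mathbb{RDPCH}}$ transports to $\models_{\mathbb{RPCH}^d}$ under $\partial$.

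Combining these two transports, the algebraizability equivalence (A) for $\mathcal{RDPCH}$ from Corollary \ref{alg_semantics1} maps under $\partial$ to the corresponding equivalence for $\mathcal{RPCH}^d$, with the dualized defining identity and dualized equivalence system. Because $\partial$ is an involution and preserves finiteness and structurality, all the defining conditions of BP-algebraizability are preserved, yielding the desired conclusion. I would remark that $\mathbb{RPCH}^d$ is indeed a variety (being the isomorphic dual image of the variety $\mathbb{RDPCH}$), so the hypotheses needed to invoke the Blok--Pigozzi framework carry over verbatim.

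The main obstacle, and the only point requiring genuine care rather than bookkeeping, is verifying that the duality $\partial$ acts \emph{coherently} on the derived connective $\to_H$ and on $\leftrightarrow_H$, since these are defined abbreviations rather than primitive symbols; one must check that $\partial(\alpha \to_H \beta)$ is the correct dual abbreviation and that the defining identity $p \approx p \to_H p$ dualizes to an equivalent identity over $\mathbb{RPCH}^d$. Once this coherence is confirmed, the rest is a routine transfer of the already-established machinery, and the corollary follows immediately. For this reason I expect the proof to be stated in one or two sentences: the logic $\mathcal{RPCH}^d$ is the dual of $\mathcal{RDPCH}$, hence Corollary \ref{alg_semantics1} dualizes to give the result.
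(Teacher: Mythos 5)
Your proposal is correct and matches the paper's approach exactly: the paper gives no separate proof, simply noting that $\mathcal{RPCH}^d$ is defined by dualizing the axioms and inference rules of $\mathcal{RDPCH}$ and that the corollary therefore follows from Corollary \ref{alg_semantics1} by duality. Your additional discussion of the involution on formulas and the coherence of the derived connectives is a more careful spelling-out of the same one-line argument the authors intend.
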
  

\begin{definition}  
  The logic $\mathcal{DMH}$ is defined in \cite{CoSa22} as an axiomatic extension of the logic $\mathcal{DHMH}$, by
the addition of the following axioms:  (15) and  
\begin{thlist}
\item[20] $\sim \sim \alpha \leftrightarrow_H \alpha$.  
\end{thlist}
\end{definition}

The following theorem is also proved in \cite{CoSa22}.

\begin{Theorem} {\rm(\cite{CoSa22})} \label{T_DMH}
the logic $\mathcal{DMH}$ is BP-algebraizable with 
the variety $\mathbb{DMH}$ 
   as its equivalent algebraic semantics.  
\end{Theorem}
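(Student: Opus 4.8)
The plan is to obtain the result from the algebraization machinery already set up, rather than to verify algebraizability directly. By definition $\mathcal{DMH}$ is an axiomatic extension of the dually hemimorphic semi-Heyting logic $\mathcal{DHMSH}$: it adjoins the axioms (15) and (20) while retaining the rules (MP) and (CP). Hence Theorem \ref{T5.9} applies verbatim and yields, at once, that $\mathbf{Mod}(\mathcal{DMH})$ is an equivalent algebraic semantics of $\mathcal{DMH}$, with the defining identity $p \approx p \to_H p$ and equivalence formulas $\Delta = \{p \to_H q,\, q \to_H p\}$ inherited from Theorem \ref{CorExt}. The whole theorem therefore reduces to the purely algebraic identity of varieties $\mathbf{Mod}(\mathcal{DMH}) = \mathbb{DMH}$; once that is proved, BP-algebraizability with $\mathbb{DMH}$ as equivalent algebraic semantics is immediate.

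To establish $\mathbf{Mod}(\mathcal{DMH}) = \mathbb{DMH}$ I would translate the two new axioms into their algebraic counterparts $\delta \approx 1$; the axioms of $\mathcal{DHMSH}$ itself need no separate treatment, since they are already validated by membership in $\mathbb{DHMSH}$. Axiom (15), $(\alpha \land \beta) \to \alpha$, becomes the identity $(x \land y) \to x \approx 1$, which is exactly condition (H); imposing it upgrades the semi-Heyting reduct $\langle A, \lor, \land, \to, 0, 1\rangle$ to a Heyting algebra. Axiom (20), $\sim\sim\alpha \leftrightarrow_H \alpha$, becomes $(x'' \leftrightarrow_H x) \approx 1$. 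Here I would first record the auxiliary fact that in any semi-Heyting algebra $a \leftrightarrow_H b = 1$ if and only if $a = b$: the direction assuming $a = b$ uses (SH4), while the converse follows by instantiating (SH2) at $y := a \land b$ to deduce $a \leq b$ from $a \to_H b = 1$, together with the symmetric computation. This auxiliary fact reduces axiom (20) to the involution identity $x'' \approx x$.

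It then remains to recognize the resulting variety as $\mathbb{DMH}$. The Heyting part is secured by the translation of (15). For the De Morgan part, every algebra of $\mathbb{DHMSH}$ already satisfies $0' \approx 1$, $1' \approx 0$ and the $\land$-De Morgan law $(x \land y)' \approx x' \lor y'$; adjoining the involution $x'' \approx x$ furnished by (20) turns $\langle A, \lor, \land, ', 0, 1\rangle$ into a De Morgan algebra, because the $\lor$-De Morgan law is recovered by substituting $x', y'$ for $x, y$ in the $\land$-law and applying involution. Thus $\mathbf A \in \mathbf{Mod}(\mathcal{DMH})$ exactly when its $\to$-reduct is a Heyting algebra and its $'$-reduct is a De Morgan algebra, which is precisely the definition of a De Morgan Heyting algebra; conversely every member of $\mathbb{DMH}$ validates (15) and (20) as $\approx 1$. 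This gives $\mathbf{Mod}(\mathcal{DMH}) = \mathbb{DMH}$ and finishes the proof.

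The one genuinely delicate point I anticipate is the auxiliary lemma that $\leftrightarrow_H$ internalizes equality in semi-Heyting algebras: it is exactly this fact that licenses reading axiom (20) as the full involution identity $x'' \approx x$ rather than as a one-sided inequality, and without it the identification of the two varieties would fail. The remaining work is routine bookkeeping, confirming that the De Morgan conditions already present in $\mathbb{DHMSH}$ together with involution reproduce the De Morgan algebra axioms exactly, with nothing missing and nothing superfluous.
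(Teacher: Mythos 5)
Your argument is correct, but it is worth noting that the paper does not actually prove this statement: Theorem \ref{T_DMH} is imported wholesale from \cite{CoSa22}, with no argument given here. What you have done is reconstruct a proof from the machinery the paper does expose, and your route is exactly the pattern the paper itself uses one step later for the \emph{regular} logics (Corollaries \ref{alg_semantics1}--\ref{alg_semantics3}): invoke Theorem \ref{T5.9} to get that $\mathbf{Mod}(\mathcal{E})$ is an equivalent algebraic semantics of any axiomatic extension $\mathcal{E}$ of $\mathcal{DHMSH}$, then identify $\mathbf{Mod}(\mathcal{E})$ with the intended variety by translating the new axioms into identities. Your two key computations are sound: the auxiliary fact that $a \leftrightarrow_H b = 1$ iff $a = b$ follows from (SH2) and (SH4) exactly as you say (from $a \to_H b = 1$ one gets $a = a \wedge (a \to (a \wedge b)) = a \wedge b$), so (20) does yield the full involution $x'' \approx x$; and (15) is literally condition (H). The only point you pass over silently is that $\mathbf{Mod}(\mathcal{DMH})$ is defined via \emph{all} theorems $\delta$ of the logic, not just the added axioms, so identifying it with the algebras validating (15) and (20) requires soundness of (MP) and (CP) with respect to the $\delta \approx 1$ semantics; both are routine in $\mathbb{DHMSH}$ (for (CP) one uses that the $\wedge$-De Morgan law makes $'$ order-reversing), but a complete write-up should say so. Your reconstruction is a reasonable stand-in for the missing proof; the paper's citation buys brevity at the cost of self-containment, while your version makes the theorem a corollary of Theorem \ref{T5.9} in the same way the paper treats its regular analogues.
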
   

We now define the logic $\mathcal{RDMH}$.
\begin{definition}
Let $\mathcal{RDMH}$ be defined as the axiomatic extension of the logic 
$\mathcal{DMH}$ 
by the axiom:
\begin{equation} \tag{$\mathcal{ M}_2$}
  [\{\alpha \ \wedge (\sim(\sim \alpha)^*)\} \vee (\beta \vee \beta^{\ast})] \leftrightarrow_H (\beta \vee \beta^{\ast}).
  \end{equation}  
 \end{definition}
 
The following corollary is immediate from Theorem \ref{T5.9}  and Theorem \ref{T_DMH}.

\begin{Corollary}  \label{alg_semantics3}  
The logic 
$\mathcal{RDMH}$ is BP-algebraizable with the variety
   $\mathbb{RDMH}$ as its equivalent algebraic semantics. 
\end{Corollary}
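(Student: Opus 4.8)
The plan is to derive Corollary \ref{alg_semantics3} as a routine instance of the general algebraization machinery already assembled in this section, rather than by any direct syntactic argument. The key observation is that $\mathcal{RDMH}$ is defined as an axiomatic extension of $\mathcal{DMH}$, and $\mathcal{DMH}$ is in turn an axiomatic extension of $\mathcal{DHMSH}$; hence $\mathcal{RDMH}$ is itself an axiomatic extension of $\mathcal{DHMSH}$. This is precisely the hypothesis needed to invoke Theorem \ref{T5.9}, which guarantees that for \emph{every} axiomatic extension $\mathcal E$ of $\mathcal{DHMSH}$, the class $\mathbf{Mod}(\mathcal E)$ is an equivalent algebraic semantics of $\mathcal E$.

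First I would apply Theorem \ref{T5.9} to $\mathcal E = \mathcal{RDMH}$, obtaining that $\mathbf{Mod}(\mathcal{RDMH})$ is an equivalent algebraic semantics of $\mathcal{RDMH}$; in particular the logic is BP-algebraizable. It then remains only to identify $\mathbf{Mod}(\mathcal{RDMH})$ with the variety $\mathbb{RDMH}$. By Theorem \ref{T_DMH}, the subextension $\mathcal{DMH}$ has $\mathbb{DMH}$ as its equivalent algebraic semantics, so $\mathbf{Mod}(\mathcal{DMH}) = \mathbb{DMH}$. Adjoining the single axiom $(\mathcal M_2)$ to $\mathcal{DMH}$ therefore corresponds, under the dual isomorphism of Theorem \ref{duallyIso} (equivalently, by the definition of $\mathbf{Mod}$), to carving out the subvariety of $\mathbb{DMH}$ satisfying the additional identity $(\mathcal M_2) \approx 1$.

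The one genuine verification, and the step I expect to be the main (if modest) obstacle, is checking that the translation of the logical axiom $(\mathcal M_2)$ into an identity via the defining identity $p \approx 1$ of Theorem \ref{CorExt} yields exactly the regularity identity (M1) that defines $\mathbb{RDMH}$ inside $\mathbb{DMH}$. Concretely, $(\mathcal M_2)$ reads $[\{\alpha \wedge (\sim(\sim\alpha)^*)\} \vee (\beta \vee \beta^*)] \leftrightarrow_H (\beta \vee \beta^*)$, and under the algebraization its associated equation $(\mathcal M_2)\approx 1$ unwinds (using that $\alpha \leftrightarrow_H \beta \approx 1$ expresses $\alpha \approx \beta$, and that $\sim$ interprets as $'$ while $^*$ interprets as $x \mapsto x\to 0$) to $(x \wedge x'{}^{*}{}') \vee (y \vee y^*) \approx y \vee y^*$, which is precisely (M1). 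Hence $\mathbf{Mod}(\mathcal{RDMH})$ is the subvariety of $\mathbb{DMH}$ axiomatized by (M1), namely $\mathbb{RDMH}$.

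Combining these, $\mathbf{Mod}(\mathcal{RDMH}) = \mathbb{RDMH}$, and so by Theorem \ref{T5.9} the logic $\mathcal{RDMH}$ is BP-algebraizable with equivalent algebraic semantics $\mathbb{RDMH}$, as claimed. The proof is thus a short composition of Theorem \ref{T5.9} and Theorem \ref{T_DMH} together with the syntactic unwinding of $(\mathcal M_2)$; no new lemmas are required, and the argument parallels verbatim the derivations of Corollary \ref{alg_semantics1} and Corollary \ref{alg_semantics2}.
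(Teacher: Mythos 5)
Your proposal is correct and follows essentially the same route as the paper, which derives the corollary as an immediate consequence of Theorem \ref{T5.9} applied to the axiomatic extension $\mathcal{RDMH}$ of $\mathcal{DHMSH}$, together with Theorem \ref{T_DMH} and the identification of $\mathbf{Mod}(\mathcal{RDMH})$ with the subvariety of $\mathbb{DMH}$ cut out by the identity corresponding to $(\mathcal{M}_2)$, namely (M1). Your explicit unwinding of $(\mathcal{M}_2)$ into (M1) is a detail the paper leaves implicit, but the argument is the same.
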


The following corollary follows from 
Corollary \ref{C2.7},  Corollary \ref{C1},  Corollary \ref{alg_semantics1},  Corollary \ref{alg_semantics2}  and Corollary \ref{alg_semantics3}.                   

\begin{Corollary}
\begin{thlist}
\item $|\mathbf{Ext}(\mathcal{RDPCH})| = 2^{\aleph_0}$. 
  \item $|\mathbf{Ext}(\mathcal{RPCH}^d)|  = 2^{\aleph_0}$, 
\item $|\mathbf{Ext}(\mathcal{RDMH})|  = 2^{\aleph_0}$. 
\end{thlist} 
\end{Corollary}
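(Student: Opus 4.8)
The plan is to obtain each of the three cardinality equalities as a routine consequence of the Blok--Pigozzi machinery already assembled in this section, combined with the cardinality results established for the algebraic side. The key observation is that each of the three logics $\mathcal{RDPCH}$, $\mathcal{RPCH}^d$, and $\mathcal{RDMH}$ has already been shown (in Corollaries \ref{alg_semantics1}, \ref{alg_semantics2}, and \ref{alg_semantics3}, respectively) to be BP-algebraizable with a \emph{variety} as its equivalent algebraic semantics. This is exactly the hypothesis required to invoke Theorem \ref{duallyIso}, so the strategy reduces to applying that dual-isomorphism theorem three times and then transporting the cardinalities across the resulting bijections.

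First I would handle $\mathcal{RDPCH}$. By Corollary \ref{alg_semantics1}, $\mathcal{RDPCH}$ is BP-algebraizable with $\mathbb{RDPCH}$ as its equivalent algebraic semantics, and $\mathbb{RDPCH}$ is a variety. Theorem \ref{duallyIso} then yields that $\mathbf{Ext}(\mathcal{RDPCH})$ is dually isomorphic to $\mathbf{L_V}(\mathbb{RDPCH}) = \mathcal{L}_{\rm V}(\mathbb{RDPCH})$. Since a dual isomorphism of lattices is in particular a bijection of underlying sets, we get $|\mathbf{Ext}(\mathcal{RDPCH})| = |\mathcal{L}_{\rm V}(\mathbb{RDPCH})|$. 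The same argument verbatim, applied through Corollary \ref{alg_semantics2} and Corollary \ref{alg_semantics3}, gives $|\mathbf{Ext}(\mathcal{RPCH}^d)| = |\mathcal{L}_{\rm V}(\mathbb{RPCH}^d)|$ and $|\mathbf{Ext}(\mathcal{RDMH})| = |\mathcal{L}_{\rm V}(\mathbb{RDMH})|$.

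It remains to evaluate the three right-hand cardinalities. For $\mathbb{RDMH}$ this is immediate: Corollary \ref{C1} states $|\mathcal{L}_{\rm V}(\mathbb{RDMH})| = 2^{\aleph_0}$. For $\mathbb{RDPCH}$ and $\mathbb{RPCH}^d$ I would invoke Corollary \ref{C2.7}, which gives the lattice isomorphisms $\mathcal{L}_{\rm V}(\mathbb{RDBLP}) \cong \mathcal{L}_{\rm V}(\mathbb{RDPCH}) \cong \mathcal{L}_{\rm V}(\mathbb{RPCH}^d) \cong \mathcal{L}_{\rm V}(\mathbb{RDBLH})$; combined with the cited results of \cite{AdSaVc19} and \cite{AdSaVc20} (which, as noted in the third application in the introduction, establish $2^{\aleph_0}$ for the subvariety lattices of $\mathbb{RDPCH}$, $\mathbb{RDBLH}$, and $\mathbb{RPCH}^d$), each of these common cardinalities equals $2^{\aleph_0}$. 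Chaining the equalities then delivers all three assertions.

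I do not anticipate any genuine obstacle here, since every ingredient is already in place; the only point requiring a moment's care is bookkeeping, namely confirming that the equivalent algebraic semantics in each of Corollaries \ref{alg_semantics1}--\ref{alg_semantics3} is genuinely a variety (not merely a quasivariety), which is what licenses the use of Theorem \ref{duallyIso} rather than a weaker correspondence. Once that is checked, the proof is a three-fold application of Theorem \ref{duallyIso} followed by substitution of the cardinality values from Corollaries \ref{C2.7} and \ref{C1}.
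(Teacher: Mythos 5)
Your proposal is correct and follows essentially the same route as the paper, which derives the corollary from Corollaries \ref{C2.7}, \ref{C1}, \ref{alg_semantics1}, \ref{alg_semantics2} and \ref{alg_semantics3} together with the Blok--Pigozzi dual isomorphism of Theorem \ref{duallyIso}. The only minor slip is attributing the $2^{\aleph_0}$ counts for $\mathcal{L}_{\rm V}(\mathbb{RDPCH})$, $\mathcal{L}_{\rm V}(\mathbb{RPCH}^d)$ and $\mathcal{L}_{\rm V}(\mathbb{RDBLH})$ directly to \cite{AdSaVc19} and \cite{AdSaVc20}; those references treat $\mathbb{RDBLP}$ and $\mathbb{RDMP}$, and the counts transfer to the other varieties via the lattice isomorphisms of Corollary \ref{C2.7}, exactly as you in fact use them.
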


\begin{remark}
We conclude the paper with the following observation:  Since the varieties $\mathbb{RDBLP}$ and $\mathbb{RDBLH}$ are
both term-equivalent to the variety $\mathbb{RDPCH}$, we could consider the logic $\mathcal{RDPCH}$ also as the logic of $\mathbb{RDBLP}$ as well as the logic of $\mathbb{RDBLH}$.  Similarly, since the varieties $\mathbb{RDMP}$, $\mathbb{RDMDBLP}$ and $\mathbb{RDMDBLH}$ are both term-equivalent to the variety $\mathbb{RDMH}$, we could consider the logic $\mathcal{RDMH}$ also as the logic for $\mathbb{RDMP}$, $\mathbb{RDMDBLP}$ or for $\mathbb{RDMDBLH}$.  
\end{remark}  

\medskip
\section*{Acknowledgements} 

 The first author wants to thank the institutional support of CONICET (Consejo Nacional de Investigaciones Cient\'ificas y T\'ecnicas) and Universidad Nacional del Sur.  




\medskip

\end{document}